\numberwithin{equation}{section}
\def \dis {\displaystyle}
\def \tex {\textstyle}
\def \confai {-\kern -.5em\rightharpoonup}
\def \cqfd {\hfill$\Box$}
\def \div{\mbox{\rm div}}
\def \Div{\mbox{\rm Div}}
\def \Curl{\mbox{\rm Curl}}
\def \al {\alpha}
\def \be {\beta}
\def \ga {\gamma}
\def \De {\Delta}
\def \ep {\varepsilon}
\def \Om {\Omega}
\def \la {\lambda}
\def \ph {\varphi}
\def \ka {\kappa}
\def \si {\sigma}
\def \Si {\Sigma}
\def \ZZ {\mathbb Z}
\def \RR {\mathbb R}
\def \D {\mathscr{D}}
\def \beq {\begin{equation}}
\def \eeq {\end{equation}}
\def \ba {\begin{array}}
\def \ea {\end{array}}
\def \bs {\bigskip}
\def \ms {\medskip}
\def \ss {\smallskip}
\def \ecart {\noalign{\medskip}}
\newtheorem{Thm}{Theorem}[section]
\newtheorem{Pro}[Thm]{Proposition}
\newtheorem{Lem}[Thm]{Lemma}
\newtheorem{Adef}[Thm]{Definition}
\newenvironment{Def}{\begin{Adef}\rm}{\end{Adef}}
\newtheorem{Arem}[Thm]{Remark}
\newenvironment{Rem}{\begin{Arem}\rm}{\end{Arem}}
\newtheorem{Aexa}[Thm]{Example}
\newenvironment{Exa}{\begin{Aexa}\rm}{\end{Aexa}}
\newtheorem{Anot}[Thm]{Notation}
\def \refe #1.{(\ref{#1})}
\def \reff #1.{figure~\ref{#1}}
\def \refs #1.{Section~\ref{#1}}
\def \refss #1.{Subsection~\ref{#1}}
\def \refD #1.{Definition~\ref{#1}}
\def \refT #1.{Theorem~\ref{#1}}
\def \refL #1.{Lemma~\ref{#1}}
\def \refC #1.{Corollary~\ref{#1}}
\def \refP #1.{Proposition~\ref{#1}}
\def \refPt #1.{Properties~\ref{#1}}
\def \refR #1.{Remark~\ref{#1}}
\def \refE #1.{Example~\ref{#1}}
\def \refN #1.{Notation~\ref{#1}}
\newcounter{marnote}
\title{Which electric fields are realizable in conducting materials?}
\begin{document}
\maketitle
\vskip -.5cm
\centerline{\large
Marc Briane\footnote{Institut de Recherche Math\'ematique de Rennes, INSA de Rennes, FRANCE -- mbriane@insa-rennes.fr,},
\quad
Graeme W. Milton\footnote{Department of Mathematics, University of Utah, USA -- milton@math.utah.edu,},
\quad
Andrejs Treibergs\footnote{Department of Mathematics, University of Utah, USA -- treiberg@math.utah.edu.}
}
\vskip 1.cm
\begin{abstract}
In this paper we study the realizability of a given smooth periodic gradient field $\nabla u$ defined in $\RR^d$, in the sense of finding when one can obtain a matrix conductivity $\si$ such that $\si\nabla u$ is a divergence free current field. The construction is shown to be always possible locally in $\RR^d$ provided that $\nabla u$ is non-vanishing. This condition is also necessary in dimension two but not in dimension three. In fact the realizability may fail for non-regular gradient fields, and in general the conductivity cannot be both periodic and isotropic. However, using a dynamical systems approach the isotropic realizability is proved to hold in the whole space (without periodicity) under the assumption that the gradient does not vanish anywhere. Moreover, a sharp condition is obtained to ensure the isotropic realizability in the torus. The realizability of a matrix field is also investigated both in the periodic case and in the laminate case. In this context the sign of the matrix field determinant plays an essential role according to the space dimension.
\end{abstract}
\vskip .5cm\noindent
{\bf Keywords~:} Conductivity, Electric field, Dynamical systems
\par\bs\noindent
{\bf Mathematics Subject Classification~:} 35B27, 78A30, 37C10
\section{Introduction}
The mathematical study of composite media has grown remarkably since the seventies through the asymptotic analysis of pde's governing their behavior (see, {\em e.g.}, \cite{BLP}, \cite{BaPa}, \cite{JKO}, \cite{Mil2}). In the periodic framework of the conductivity equation, the derivation of the effective (or homogenized) properties of a given composite conductor in $\RR^d$, with a periodic matrix-valued conductivity $\si$, reduces to the cell problem of finding periodic gradients $\nabla u$ solving
\beq\label{cellpb}
\div\left(\si\nabla u\right)=0\quad\mbox{in }\RR^d,
\eeq
which gives the effective conductivity $\si^*$ {\em via} the average formula
\beq\label{effcon}
\si^*\langle\nabla u\rangle=\langle\si\nabla u\rangle.
\eeq
Note that the periodicity condition is not actually a restriction, since by~\cite{Rai} (see also \cite{All}, Theorem~1.3.23) any effective matrix can be shown to be a pointwise limit of a sequence of periodic homogenized matrices. In equation \refe{cellpb}. the vector-valued function $\nabla u$ represents the electric field, while $\si\nabla u$ is the current field according to Ohm's law. Alternatively we can consider a vector-valued potential $U$
with gradient $DU$ where each component of $U$ satisfies~\refe{cellpb}.. In this case the components of $U$ represent the potentials obtained for different applied fields, and $DU$ will be referred to as the matrix-valued electric field. Going back to the original conductivity problem it is then natural to characterize mathematically among all periodic gradient fields those solving the conductivity equation \refe{cellpb}. for some positive definite symmetric periodic matrix-valued function $\si$. In other words the question is to know which electric fields are realizable. On the other hand, this work is partly motivated by the search for sharp bounds on the effective moduli of composites. This search has led investigators to derive as much information as possible about fields in composites. A prime example is given by the positivity of the determinant of periodic matrix-valued electric fields in two dimensions obtained by Alessandrini and Nesi \cite{AlNe}. This led to sharp bounds on effective moduli for three phase conducting composites (see, {\em e.g.}, \cite{Nes,ChZh}). Therefore, a natural question to ask, which we address here, is: what are the conditions on a gradient to be realizable as an electric field?
\par
In Section~\ref{s.vec} we focus on vector-valued electric fields.
First of all, due to the rectification theorem we prove (see Theorem~\ref{thm.v1}) that any non-vanishing smooth gradient field $\nabla u$ is isotropically realizable locally in $\RR^d$, in the sense that in the neighborhood of each point equation \refe{cellpb}. holds for some isotropic conductivity $\si I_d$. Two examples show that the regularity of the gradient field is essential, and that the periodicity of $\si$ is not satisfied in general.
Conversely, in dimension two the realizability of a smooth periodic gradient field $\nabla u$ implies that $\nabla u$ does not vanish in~$\RR^2$. This is not the case in dimension three as exemplified by the periodic chain-mail of~\cite{BMN}. Again in dimension two a necessary and sufficient condition for the (at least anisotropic) realizability is given (see Theorem~\ref{thm.v2}).
Then, the question of the global isotropic realizability is investigated through a dynamical systems approach. On the one hand, considering the trajectories along the gradient field $\nabla u$ which cross a fixed hyperplane, we build (see Proposition~\ref{pro.isoreaH}) an admissible isotropic conductivity $\si$ in the whole space. The construction is illustrated with the potential $u(x):=x_1-\cos(2\pi x_2)$ in dimension two. On the other hand, upon replacing the hyperplane by the equipotential $\{u=0\}$, a general formula for the isotropic conductivity $\si$ is derived (see Theorem~\ref{thm.isoreaRd}) for any smooth gradient field in $\RR^d$. Finally, a sharp condition for the isotropic realizability in the torus is obtained (see Theorem~\ref{thm.isoreaY}), which allows us to construct a periodic conductivity~$\si$.
\par
Section~\ref{s.mat} is devoted to matrix-valued fields. The goal is to characterize those smooth potentials $U=(u_1,\dots,u_d)$ the gradient $DU$ of which is a realizable periodic matrix-valued electric field. When the determinant of $DU$ has a constant sign, it is proved to be realizable with an anisotropic matrix-valued conductivity $\si$. This can be achieved in an infinite number of ways using Piola's identity coming from mechanics (see Theorem~\ref{thm.m1} and Proposition~\ref{pro.siJDU}).
This yields a necessary and sufficient realizability condition in dimension two due to the determinant positivity result of \cite{AlNe}. However, the periodic chain-mail example of \cite{BMN} shows that this condition is not necessary in dimension three. We extend (see Theorem~\ref{thm.m2lam}) the realizability result to (non-regular) laminate matrix fields having the remarkable property of a constant sign determinant in any dimension (see \cite{BMN}, Theorem~3.3).
\newpage
\subsubsection*{Notations}
\begin{itemize}
\item $\left(e_1,\dots,e_d\right)$ denotes the canonical basis of $\RR^d$.
\item $I_d$ denotes the unit matrix of $\RR^{d\times d}$, and $R_\perp$ denotes the $90^\circ$ rotation matrix in $\RR^{2\times 2}$.
\item For $A\in\RR^{d\times d}$, $A^T$ denotes the transpose of the matrix $A$.
\item For $\xi,\eta\in\RR^d$, $\xi\otimes\eta$ denotes the matrix $\left[\xi_i\,\eta_j\right]_{1\leq i,j\leq d}$.
\item $Y$ denotes any closed parallelepiped of $\RR^d$, and $Y_d:=\left[0,1\right]^d$.
\item $\langle\cdot\rangle$ denotes the average over $Y$.
\item $C^k_\sharp(Y)$ denotes the space of $k$-continuously differentiable $Y$-periodic functions on $\RR^d$.
\item $L^2_\sharp(Y)$ denotes the space of $Y$-periodic functions in $L^2_{\rm }(\RR^d)$, and $H^1_\sharp(Y)$ denotes the space of functions $\ph\in L^2_\sharp(Y)$ such that $\nabla\ph\in L^2_\sharp(Y)^d$.
\item For any open set $\Om$ of $\RR^d$, $C^\infty_c(\Om)$ denotes the space of  smooth functions with compact support in $\Om$, and $\D'(\Om)$ the space of distributions on $\Om$.
\item For $u\in C^1(\RR^d)$ and $U=(U_j)_{1\leq j\leq d}\in C^1(\RR^d)^d$,
\beq
\nabla u:=\left({\partial u\over\partial x_i}\right)_{1\leq i\leq d}\quad\mbox{and}\quad
DU:=\big(\nabla U_1,\dots,\nabla U_d\big)=\left[{\partial U_j\over\partial x_i}\right]_{1\leq i,j\leq d}.
\eeq
The partial derivative $\dis {\partial u\over\partial x_i}$ will be sometimes denoted $\partial_i u$.
\item For $\Si=\left[\Si_{ij}\right]_{1\leq i,j\leq d}\in C^1(\RR^d)^{d\times d}$,
\beq
\Div\left(\Si\right):=\left(\sum_{i=1}^d{\partial \Si_{ij}\over\partial x_i}\right)_{1\leq j\leq d}\quad\mbox{and}\quad
\Curl\left(\Si\right):=\left({\partial \Si_{ik}\over\partial x_j}-{\partial \Si_{jk}\over\partial x_i}\right)_{1\leq i,j,k\leq d}.
\eeq
\item For $\xi^1_1,\dots,\xi^{d-1}$ in $\RR^d$, the cross product $\xi^1\times\cdots\times \xi^{d-1}$ is defined by
\beq
\xi\cdot\left(\xi^1\times\cdots\times \xi^{d-1}\right)=\det\left(\xi,\xi^1,\dots,\xi^{d-1}\right),\quad\mbox{for any }\xi\in\RR^d,
\eeq
where $\det$ is the determinant with respect to the canonical basis $(e_1,\dots,e_d)$, or equivalently, the $k^{\rm th}$ coordinate of the cross product is given by 
\beq\label{crossprod}
\left(\xi^1\times\cdots\times \xi^{d-1}\right)\cdot e_k=(-1)^{k+1}\left|\,\begin{smallmatrix}
\xi^1_1 & \cdots & \xi^{d-1}_1
\\
\vdots & \qquad\ddots\qquad & \vdots
\\
\\
\xi^1_{k-1} & \cdots & \xi^{d-1}_{k-1}
\\
\xi^1_{k+1} & \cdots & \xi^{d-1}_{k+1}
\\
\vdots & \qquad\ddots\qquad & \vdots
\\
\\
\xi^1_d & \cdots & \xi^{d-1}_d
\end{smallmatrix}\,\right|.
\eeq
\end{itemize}
\section{The vector field case}\label{s.vec}
\begin{Def}\label{def.vfield}
Let $\Om$ be an (bounded or not) open set of $\RR^d$, $d\geq 2$, and let $u\in H^1_{\rm}(\Om)$. The vector-valued field $\nabla u$ is said to be a {\em realizable} electric field in $\Om$ if there exist a symmetric positive definite matrix-valued $\si\in L^\infty_{\rm loc}(\Om)^{d\times d}$
such that
\beq
\div\left(\si\nabla u\right)=0\quad\mbox{in }\D'(\Om).
\eeq
If $\si$ can be chosen isotropic ($\si\to\si I_d$), the field $\nabla u$ is said to be {\em isotropically realizable} in $\Om$.
\end{Def}
\subsection{Isotropic and anisotropic realizability}
\subsubsection{Characterization of an isotropically realizable electric field}
\begin{Thm}\label{thm.v1}
Let $Y$ be a closed parallelepiped of $\RR^d$. Consider $u\in C^1(\RR^d)$, $d\geq 2$, such that
\beq\label{graduper}
\nabla u\mbox{ is $Y$-periodic}\quad\mbox{and}\quad\langle\nabla u\rangle\neq 0.
\eeq
\begin{itemize}
\item[$i)$] Assume that
\beq\label{gradu­0}
\nabla u\neq 0\quad\mbox{everywhere in }\RR^d.
\eeq
Then, $\nabla u$ is an isotropically realizable electric field locally in $\RR^d$ associated with a continuous conductivity.
\item[$ii)$] Assume that $\nabla u$ satisfies condition \refe{graduper}., and is a realizable electric field in $\RR^2$ associated with a smooth $Y$-periodic conductivity. Then, condition \refe{gradu­0}. holds true.
\item[$iii)$] There exists a gradient field $\nabla u$ satisfying \refe{graduper}., which is a realizable electric field in $\RR^3$ associated with a smooth $Y_3$-periodic conductivity, and which admits a critical point $y_0$, {\em i.e.} $\nabla u(y_0)=0$.
\end{itemize}
\end{Thm}
\begin{Rem}\label{rem.thm1}
Part $i)$ of Theorem~\ref{thm.v1} provides a local result in the smooth case, and still holds without the periodicity assumption on $\nabla u$. It is then natural to ask if the local result remains valid when the potential $u$ is only Lipschitz continuous. The answer is negative as shown in Example~\ref{exa1} below.
We may also ask if a global realization of a periodic gradient can always be obtained with a periodic isotropic conductivity $\sigma$. 
The answer is still negative as shown in Example~\ref{exa2}.
\par
The underlying reason for these negative results is that the proof of Theorem~\ref{thm.v1} is based on the rectification theorem which needs at least $C^1$-regularity and is local.
\end{Rem}
\begin{Exa}\label{exa1}
Let $\chi:\RR\to\RR$ be the $1$-periodic characteristic function which agrees with the characteristic function of $[0,1/2]$ on $[0,1]$. Consider the function $u$ defined in $\RR^2$ by
\beq\label{uexa1}
u(x) := x_2-x_1 + \int_0^{x_1}\chi(t)\,dt,\quad\mbox{for any }x=(x_1,x_2)\in\RR^2.
\eeq
The function $u$ is Lipschitz continuous, and
\beq\label{graduexa1}
\nabla u=\chi\,e_2+\left(1-\chi\right)\left(e_2-e_1\right)\quad\mbox{a.e. in }\in\RR^2.
\eeq
The discontinuity points of $\nabla u$ lie on the lines $\{x_1=1/2\left(1+k\right)\}$, $k\in\ZZ$.
Let $Q:=(-r,r)^2$ for some $r\in(0,1/2)$.
\par
Assume that there exists a positive function $\si\in L^\infty(Q)$ such that $\si\nabla u$ is divergence free in $Q$. Let $v$ be a stream function such that $\si\nabla u=R_\perp\nabla v$ a.e. in $Q$. The function $v$ is unique up to an additive constant, and is Lipschitz continuous.
On the one hand, we have
\beq
0=\nabla u\cdot\nabla v=(e_2-e_1)\cdot\nabla v\quad\mbox{a.e. in }(-r,0)\times(-r,r),
\eeq
hence $v(x)=f(x_1+x_2)$ for some Lipschitz continuous function $f$ defined in $[-2r,r]$. On the other hand, we have
\beq
0=\nabla u\cdot\nabla v=e_2\cdot\nabla v\quad\mbox{a.e. in }(0,r)\times(-r,r),
\eeq
hence $v(x)=g(x_1)$ for some Lipschitz continuous function $g$ in $[0,r]$. By the continuity of $v$ on the line $\{x_1=0\}$, we get that $f(x_2)=g(0)$, hence $f$ is constant in $[-r,r]$. Therefore, we have
\beq
\nabla v=0\;\;\mbox{a.e. in }(-r,0)\times(0,r)\quad\mbox{and}\quad\si\nabla u=\si\left(e_2-e_1\right)\neq 0\;\;\mbox{a.e. in }(-r,0)\times(0,r),
\eeq
which contradicts the equality $\si\nabla u=R_\perp\nabla v$ a.e. in $Q$. Therefore, the field $\nabla u$ is non-zero a.e. in $\RR^2$, but is not an isotropically realizable electric field in the neighborhood of any point of the lines $\{x_1=1/2\left(1+k\right)\}$, $k\in\ZZ$.
\end{Exa}
\begin{Rem}\label{rem.exa2}
The singularity of $\nabla u$ in Example~\ref{exa1} induces a jump of the current at the interface $\{x_1=0\}$. To compensate this jump we need to introduce formally an additional current concentrated on this line, which would imply an infinite conductivity there. The assumption of bounded conductivity (in $L^\infty$) leads to the former contradiction. Alternatively, with a smooth approximation of $\nabla u$ around the line $\{x_1=0\}$, then part~$i)$ of Theorem~\ref{thm.v1} applies which allows us to construct a suitable conductivity. But this conductivity blows up as the smooth gradient tends to $\nabla u$.
\end{Rem}
\begin{Exa}\label{exa2}
Consider the function $u$ defined in $\RR$ by
\beq\label{uexa2}
u(x):=x_1-\cos\left(2\pi x_2\right),\quad\mbox{for any }x=(x_1,x_2)\in\RR^2.
\eeq
The function $u$ is smooth, and its gradient $\nabla u$ is $Y_2$-periodic, independent of the variable $x_1$ and non-zero on $\RR^2$.
\par
Assume that there exists a smooth positive function $\si$ defined in $\RR^2$, which is $a$-periodic with respect to $x_1$ for some $a>0$, and such that $\si\nabla u$ is divergence free in $\RR^2$. Set $Q:=(0,a)\times (-r,r)$ for some $r\in(0,{1\over 2})$. By an integration by parts and taking into account the periodicity of $\si\nabla u$ with respect to $x_1$, we get that
\beq
\ba{ll}
\dis 0 & \dis =\int_Q\div\left(\si\nabla u\right)dx
\\ \ecart
& \dis =\underbrace{\int_{-r}^r\big(\si\nabla u(a,x_2)-\si\nabla u(0,x_2)\big)\cdot e_1\,dx_2}_{=0}
+\int_0^a\big(\si\nabla u(x_1,r)-\si\nabla u(x_1,-r)\big)\cdot e_2\,dx_1
\\
& \dis =2\pi\sin\left(2\pi r\right)\int_0^a\big(\si(x_1,r)+\si(x_1,-r)\big)\,dx_1>0,
\ea
\eeq
which yields a contradiction. Therefore, the $Y_2$-periodic field $\nabla u$ is not an isotropically realizable electric field in the torus.
\end{Exa}
\noindent
{\bf Proof of Theorem~\ref{thm.v1}.}
\par\ss\noindent
$i)$ Let $x_0\in\RR^d$. First assume that $d>2$. By the rectification theorem (see, {\em e.g.}, \cite{Arn}) there exist an open neighborhood $V_0$ of $x_0$, an open set $W_0$, and a $C^1$-diffeomorphism $\Phi:V_0\to W_0$ such that $D\Phi^T\,\nabla u=e_1$. Define $v_i:=\Phi_{i+1}$ for $i\in\{1,\dots,d-1\}$. Then, we get that $\nabla v_i\cdot\nabla u=0$ in~$V_0$, and the rank of $(\nabla v_1,\dots,\nabla v_{d-1})$ is equal to $(d-1)$ in $V_0$. Consider the continuous function
\beq\label{sigradvk}
\si:={|\nabla v_1\times\cdots\times\nabla v_{d-1}|\over|\nabla u|}>0\quad\mbox{in }V_0.
\eeq Since by definition, the cross product $\nabla v_1\times\cdots\times\nabla v_{d-1}$ is orthogonal to each $\nabla v_i$ as is $\nabla u$, then due to the condition~\refe{gradu­0}. combined with a continuity argument, there exists a fixed $\tau_0\in\{\pm 1\}$ such that
\beq
\nabla v_1\times\cdots\times\nabla v_{d-1}=\tau_0\,\si\nabla u\quad\mbox{in }V_0.
\eeq
Moreover, Theorem~3.2 of \cite{Dac} implies that $\nabla v_1\times\cdots\times\nabla v_{d-1}$ is divergence free, and so is $\si\nabla u$. Therefore, $\nabla u$ is an isotropically realizable electric field in $V_0$.
\par
When $d=2$, the equality $\nabla v_1\cdot\nabla u=0$ in $V_0$ yields for some fixed $\tau_0\in\{\pm 1\}$,
\beq\label{v1}
\tau_0\,R_\perp\nabla v_1=\underbrace{{|\nabla v_1|\over|\nabla u|}}_{\tex\si:=}\nabla u\quad\mbox{in }V_0,
\eeq
which also allows us to conclude the proof of $(i)$.
\par\ms\noindent
$ii)$ It is a straightforward consequence of \cite{AlNe} (Proposition~2, the smooth case).
\par\ms\noindent
$iii)$ Ancona \cite{Anc} first built an example of potential with critical points in dimension $d\geq 3$. The following construction is a regularization of the simpler example of \cite{BMN} which allows us to derive a change of sign for the determinant of the matrix electric field. Consider the periodic chain-mail $Q_\sharp\subset\RR^3$ of~\cite{BMN}, and the associated isotropic two-phase conductivity $\si^\ka$ which is equal to $\ka\gg 1$ in $Q_\sharp$ and to $1$ elsewhere. Now, let us modify slightly the conductivity $\si^\ka$ by considering a smooth $Y_3$-periodic isotropic conductivity $\tilde{\si}^\ka\in[1,\ka]$ which agrees with $\si^\ka$, except within a thin boundary layer of each interlocking ring $Q\subset Q_\sharp$, of width $\ka^{-1}$ from the boundary of~$Q$. Proceeding as in \cite{BMN} it is easy to prove that the smooth periodic matrix-valued electric field $D\tilde{U}^\ka$ solution of
\beq
\Div\,\big(\tilde{\si}^\ka D\tilde{U}^\ka\big)=0\;\;\mbox{in }\RR^3,\quad\mbox{with}\quad\langle D\tilde{U}^\ka\rangle=I_3,
\eeq
converges (as $\ka\to\infty$) strongly in $L^2(Y_3)^{3\times 3}$ to the same limit $DU$ as the electric field $DU^\ka$ associated with $\si^\ka$. Then, by virtue of \cite{BMN} $\det\left(DU\right)$ is negative around some point between two interlocking rings, so is $\det\big(D\tilde{U}^\ka\big)$ for $\ka$ large enough. This combined with $\big\langle\det\big(D\tilde{U}^\ka\big)\big\rangle=1$ and the continuity of $D\tilde{U}^\ka$, implies that there exists some point $y_0\in Y_3$ such that $\det\big(D\tilde{U}^\ka(y_0)\big)=0$. Therefore, there exists $\xi\in\RR^3\setminus\{0\}$ such that the potential $u:=\tilde{U}^\ka\cdot\xi$ satisfies $\langle\nabla u\rangle=\xi$ and $\nabla u(y_0)=D\tilde{U}^\ka(y_0)\,\xi=0$. Theorem~\ref{thm.v1} is thus proved. \cqfd
\subsubsection{Characterization of the anisotropic realizability in dimension two}
In dimension two we have the following characterization of realizable electric vector fields:
\begin{Thm}\label{thm.v2}
Let $Y$ be a closed parallelogram of $\RR^2$.
Consider a function $u\in C^1(\RR^2)$ satisfying~\refe{graduper}.. Then, a necessary and sufficient condition for $\nabla u$ to be a realizable electric field associated with a symmetric positive definite matrix-valued conductivity in $C^0_\sharp(Y)^{d\times d}$, is that there exists a function $v\in C^1(\RR^2)$ satisfying~\refe{graduper}. such that
\beq\label{gradugradv}
R_\perp\nabla u\cdot\nabla v=\det\left(\nabla u,\nabla v\right)>0\quad\mbox{everywhere in }\RR^2.
\eeq
\end{Thm}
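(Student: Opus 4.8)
The plan is to prove the two directions separately, exploiting the two-dimensional identification of divergence-free fields with rotated gradients.

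For the \emph{necessity} part, suppose $\si\in C^0_\sharp(Y)^{2\times 2}$ is symmetric positive definite with $\div(\si\nabla u)=0$ in $\RR^2$. By part $ii)$ of Theorem~\ref{thm.v1} we know $\nabla u\neq 0$ everywhere; more to the point, since $\si\nabla u$ is divergence free and $\RR^2$ is simply connected, there is a stream function $v\in C^1(\RR^2)$ with $\si\nabla u=R_\perp\nabla v$ (the stream potential is determined up to an additive constant). I would then compute $R_\perp\nabla u\cdot\nabla v$: using $\nabla v=-R_\perp\si\nabla u$ and $R_\perp^T R_\perp=I_2$, one gets $R_\perp\nabla u\cdot\nabla v=-R_\perp\nabla u\cdot R_\perp\si\nabla u=-\nabla u\cdot\si\nabla u<0$ (or $>0$ depending on orientation conventions; the sign is fixed by the positive definiteness of $\si$, and one can always replace $v$ by $-v$ to land on the stated sign). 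This gives \refe{gradugradv}.. It remains to check that $\nabla v$ is $Y$-periodic with $\langle\nabla v\rangle\neq 0$: periodicity follows because $\si\nabla u=R_\perp\nabla v$ is $Y$-periodic (product of periodic fields), so $\nabla v$ is $Y$-periodic; and $\langle\nabla v\rangle=R_\perp^T\langle R_\perp\nabla v\rangle=R_\perp^T\langle\si\nabla u\rangle$, which cannot vanish since it equals (up to rotation) the effective current $\si^*\langle\nabla u\rangle$ with $\si^*$ positive definite and $\langle\nabla u\rangle\neq 0$ by \refe{graduper}..

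For the \emph{sufficiency} part, suppose $v\in C^1(\RR^2)$ satisfies \refe{graduper}. and \refe{gradugradv}.. The idea is to solve algebraically, at each point, for a symmetric positive definite $\si$ with $\si\nabla u=R_\perp\nabla v$. Write $a:=\nabla u$, $b:=R_\perp\nabla v$; the condition \refe{gradugradv}. says exactly $a\cdot b=R_\perp\nabla u\cdot\nabla v>0$ (using $R_\perp^T R_\perp=I_2$ and $R_\perp b=-\nabla v$), in particular $a\neq0$ and $b\neq0$ pointwise. I would now take the explicit formula, familiar from the two-dimensional theory,
\beq
\si:={b\otimes b\over a\cdot b}+{(R_\perp a)\otimes(R_\perp a)\over a\cdot b}\cdot{|a\cdot b|^2\over|a|^2\,\ldots}
\eeq
— more cleanly: decompose in the orthogonal basis $\{a,R_\perp a\}$. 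Writing $b=\alpha a+\beta R_\perp a$ with $\alpha=(a\cdot b)/|a|^2>0$ and $\beta=(R_\perp a\cdot b)/|a|^2$, one checks that
\beq
\si:={1\over\alpha}\,{a\otimes a\over|a|^2}+{\beta\over\alpha}\left({a\otimes R_\perp a+R_\perp a\otimes a\over|a|^2}\right)+\left({1\over\alpha}+{\beta^2\over\alpha}\right){R_\perp a\otimes R_\perp a\over|a|^2}
\eeq
maps $a\mapsto b$ and is symmetric; its eigenvalues (computed in the orthonormal frame $a/|a|,\,R_\perp a/|a|$) are the eigenvalues of the $2\times2$ matrix $\begin{pmatrix}1/\alpha & \beta/\alpha\\ \beta/\alpha & (1+\beta^2)/\alpha\end{pmatrix}$, which has determinant $1/\alpha^2>0$ and trace $(2+\beta^2)/\alpha>0$, hence is positive definite. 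Since $\alpha,\beta$ are continuous (by \refe{gradugradv}. the denominator never vanishes) and $a$ is continuous, $\si$ is a continuous $Y$-periodic field — periodicity because it is built from the $Y$-periodic fields $\nabla u$ and $\nabla v$. Finally $\si\nabla u=b=R_\perp\nabla v$, and $\div(R_\perp\nabla v)=-\partial_1\partial_2 v+\partial_2\partial_1 v=0$, so $\div(\si\nabla u)=0$ in $\RR^2$, as required.

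The main obstacle is the bookkeeping in the sufficiency direction: one must exhibit a genuinely symmetric positive definite $\si$, not merely a matrix sending $\nabla u$ to $R_\perp\nabla v$ (there is a one-parameter family of such matrices, and most are neither symmetric nor positive), and then verify that the natural choice is continuous and $Y$-periodic. The sign condition \refe{gradugradv}. is precisely what guarantees $\alpha>0$ and hence positive definiteness; without it the construction collapses. Orientation conventions for $R_\perp$ will need to be pinned down so that the inequality in \refe{gradugradv}. comes out with the stated sign rather than its negative, but this is cosmetic.
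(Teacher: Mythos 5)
Your overall strategy coincides with the paper's: sufficiency is proved by an explicit pointwise construction of $\si$ in the orthogonal frame $\{\nabla u,R_\perp\nabla u\}$, and your matrix is (up to the free $(2,2)$ entry) the paper's formula (\ref{sigraduv}) written in that frame. Your necessity argument, however, takes a genuinely different and somewhat leaner route: you take $v$ to be the stream function of the divergence-free field $\si\nabla u$, so that condition (\ref{gradugradv}) reduces to $\nabla u\cdot\si\nabla u>0$, which only requires knowing that $\nabla u$ never vanishes (part $ii)$ of Theorem~\ref{thm.v1}, i.e.\ the Alessandrini--Nesi critical-point result). The paper instead takes $v$ to be a \emph{second solution} of $\div\left(\si\nabla v\right)=0$ with $\langle\nabla v\rangle=R_\perp\langle\nabla u\rangle$ and additionally invokes the Jacobian-positivity theorem of Alessandrini--Nesi; your route dispenses with that second ingredient. (Both arguments apply the ``smooth case'' of Alessandrini--Nesi to a merely continuous $\si$, so you are no less rigorous than the paper there.) Your check that $\langle\nabla v\rangle\neq 0$, via $\langle\si\nabla u\rangle=\si^*\langle\nabla u\rangle\neq 0$, is correct.

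Two algebra slips in the sufficiency direction must be repaired, because as written your $\si$ does not send $\nabla u$ to $\pm R_\perp\nabla v$. First, with $a=\nabla u$ and $b=R_\perp\nabla v$ one has $a\cdot b=\nabla u\cdot R_\perp\nabla v=-\,R_\perp\nabla u\cdot\nabla v<0$ under (\ref{gradugradv}); you should set $b:=-\,R_\perp\nabla v$ (equally divergence free), after which $\alpha=(a\cdot b)/|a|^2>0$ as intended. Second, and more substantively, your matrix has first column $(1/\alpha,\beta/\alpha)^T$ in the orthonormal frame $\big(a/|a|,R_\perp a/|a|\big)$, so it maps $a$ to $\alpha^{-1}\left(a+\beta R_\perp a\right)$, which equals $b=\alpha a+\beta R_\perp a$ only when $\alpha=1$. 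The correct choice is the symmetric matrix whose frame representation is $\left(\begin{smallmatrix}\alpha&\beta\\ \beta&(1+\beta^2)/\alpha\end{smallmatrix}\right)$ (determinant $1$, positive trace, hence positive definite; any $(2,2)$ entry exceeding $\beta^2/\alpha$ would serve), which is precisely the paper's (\ref{sigraduv}) up to that last entry. With these corrections the remaining verifications (continuity and $Y$-periodicity of $\si$, positivity from $\alpha>0$, and $\div\left(\si\nabla u\right)=\div\left(-R_\perp\nabla v\right)=0$ in $\D'(\RR^2)$) go through.
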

\begin{Rem}\label{rem.v2lreg} 
The result of Theorem~\ref{thm.v2} still holds under the less regular assumption
\beq\label{graduL2per}
\nabla u\in L^2_\sharp(Y)^2,\quad\nabla u\neq 0\;\;\mbox{everywhere in }\RR^2\quad\mbox{and}\quad\langle \nabla u\rangle\neq 0.
\eeq
Then, the $Y$-periodic conductivity $\si$ defined by the formula \refe{sigraduv}. below is only defined almost everywhere in $\RR^2$, and is not necessarily uniformly bounded from below or above in the cell period $Y$. However, $\si\nabla u$ remains divergence 
free in the sense of distributions on $\RR^2$.
\end{Rem}
\noindent
{\bf Proof of Theorem~\ref{thm.v2}.}
\par\ss\noindent
{\it Sufficient condition:}
Let $u,v\in C^1(\RR^2)$ be two functions satisfying \refe{graduper}. and \refe{gradugradv}.. From~\refe{gradugradv}. we easily deduce that $\nabla u$ does not vanish in $\RR^2$. Then, we may define in $\RR^2$ the function
\beq\label{sigraduv}
\dis \si:={1\over|\nabla u|^4}\begin{pmatrix}\partial_1 u & \partial_2 u \\ -\partial_2 u & \partial_1 u\end{pmatrix}^T
\begin{pmatrix}R_\perp\nabla u\cdot\nabla v & -\nabla u\cdot\nabla v \\ -\nabla u\cdot\nabla v
& {|\nabla u\cdot\nabla v|^2+1\over R_\perp\nabla u\cdot\nabla v}\end{pmatrix}
\begin{pmatrix}\partial_1 u & \partial_2 u \\ -\partial_2 u & \partial_1 u\end{pmatrix}.
\eeq
Hence, $\si$ is a symmetric positive definite matrix-valued function in $C^0_\sharp(Y)^{d\times d}$ with determinant~$|\nabla u|^{-4}$. Moreover, a simple computation shows that $\si\nabla u=-\,R_\perp\nabla v$, so that $\si\nabla u$ is divergence free in~$\RR^d$. Therefore, $\nabla u$ is a realizable electric field in $\RR^d$ associated with the anisotropic conductivity $\si$.
\par\ms\noindent
{\it Necessary condition:} Let $u\in C^1(\RR)$ satisfying \refe{graduper}. such that $\nabla u$ is a realizable electric field associated with a symmetric positive definite matrix-valued conductivity $\si\in C^0_\sharp(Y)^{d\times d}$ in~$\RR^d$. Consider the unique (up to an additive constant) potential $v$ which solves $\div\left(\si\nabla v\right)=0$ in $\RR^d$, with $\nabla v\in H^1_\sharp(Y)^d$ and $\langle\nabla v\rangle=R_\perp\,\langle\nabla u\rangle$, and set $U:=(u,v)$.
By \refe{graduper}. we have
\beq
\det\big(\langle DU\rangle\big)=R_\perp\langle\nabla u\rangle\cdot\langle\nabla v\rangle=\big|\langle\nabla u\rangle\big|^2>0.
\eeq
Hence, due to \cite{AlNe} (Theorem~1) we have $\det\left(DU\right)>0$ a.e. in $\RR^2$. On the other hand, assume that there exists a point $y_0\in\RR^2$ such that $\det\left(DU\right)(y_0)=0$. Then, there exists $\xi\in\RR^2\setminus\{0\}$ such that the potential $u:=U\xi$ satisfies $\nabla u(y_0)=DU(y_0)\,\xi=0$, which contradicts Proposition~2 of \cite{AlNe} (the smooth case). Therefore, we get that $R_\perp\nabla u\cdot\nabla v=\det\left(DU\right)>0$ everywhere in~$\RR^2$, that is \refe{gradugradv}..
\cqfd
\begin{Exa}\label{exa.anisi}
Go back to the Examples~\ref{exa1} and~\ref{exa2} which provide examples of gradients which are not isotropically realizable electric fields. However, in the context of Theorem~\ref{thm.v2} we can show that the two gradient fields are realizable electric fields associated with anisotropic conductivities:
\begin{enumerate}
\item Consider the function $u$ defined by \refe{uexa1}., and define the function $v$ by
\beq
v(x):= -\,x_1 + \int_0^{x_2}\chi(t)\,dt,\quad\mbox{for any }x=(x_1,x_2)\in\RR^2.
\eeq
We have
\beq
\nabla v=\chi\left(e_2-e_1\right)+\left(1-\chi\right)\left(-\,e_1\right)\quad\mbox{a.e. in }\RR^2,
\eeq
which combined with \refe{graduexa1}. implies that
\beq
\nabla u\cdot\nabla v=R_\perp\nabla u\cdot\nabla v=1\quad\mbox{a.e. in }\RR^2.
\eeq
Hence, after a simple computation formula \refe{sigraduv}. yields the rank-one laminate (see Section~\ref{ss.lam}) conductivity
\beq
\si=\chi\begin{pmatrix} 2 & 1 \\ 1 & 1 \end{pmatrix}+\left(1-\chi\right){1\over 4}\begin{pmatrix} 1 & 1 \\ 1 & 5 \end{pmatrix}
\quad\mbox{a.e. in }\RR^2.
\eeq
This combined with \refe{graduexa1}. yields
\beq
\si\nabla u=\chi\left(e_1+e_2\right)+\left(1-\chi\right)e_1\quad\mbox{a.e. in }\RR^2,
\eeq
which is divergence free in $\D'(\RR^2)$ since $\left(e_1+e_2-e_1\right)\perp e_1$.
\item Consider the function $u$ defined by \refe{uexa2}., and define the function $v$ by $v(x):=x_2$.
Then, formula \refe{sigraduv}. yields the smooth conductivity
\beq
\si={1\over\left(1+4\pi^2\sin^2(2\pi x_2)\right)^2}
\begin{pmatrix} \left(1+4\pi^2\sin^2(2\pi x_2)\right)^2+4\pi^2\sin^2(2\pi x_2) & -\,2\pi\sin(2\pi x_2) \\ -\,2\pi\sin(2\pi x_2) & 1 \end{pmatrix},
\eeq
This implies that $\si\nabla u=e_1$ which is obviously divergence free in $\RR^2$.
\end{enumerate}
\end{Exa}
\subsection{Global isotropic realizability}
In the previous section we have shown that not all gradients $\nabla u$ satisfying \refe{graduper}. and \refe{gradu­0}. are isotropically realizable when we assume $\sigma$ is periodic. 
In the present section we will prove that the isotropic realizability actually holds in the whole space $\RR^d$ when we relax the periodicity assumption on $\sigma$. To this end consider for a smooth periodic gradient field $\nabla u\in C^1_\sharp(Y)^d$, the following gradient dynamical system
\beq\label{Xtx}
\left\{\ba{rl}
\dis {dX\over dt}(t,x) & =\nabla u\big(X(t,x)\big)
\\ \ecart
X(0,x) & =x,
\ea\right.
\quad\mbox{for }t\in\RR,\ x\in\RR^d,
\eeq
where $t$ will be referred to as the time.
First, we will extend the local rectification result of Theorem~\ref{thm.v1} to the whole space involving a hyperplane. Then, using an alternative approach we will obtain the isotropic realizability in the whole space replacing the hyperplane by an equipotential. Finally, we will give a necessary and sufficient for the isotropic realizability in the torus.
\subsubsection{A first approach}
We have the following result:
\begin{Pro}\label{pro.isoreaH}
Let $u$ be a function in $C^2(\RR^d)$ such that $\nabla u$ satisfies \refe{graduper}. and \refe{gradu­0}..
Also assume that there exists an hyperplane $H:=\{x\in\RR^d:x\cdot\nu=h\}$ such that each trajectory $X(\cdot,x)$ of \refe{Xtx}., for $x\in\RR^d$, intersects $H$ only at one point $z_H(x)=X\big(\tau_H(x),x\big)$ and at a unique time $\tau_H(x)\in\RR$, in such a way that $\nabla u$ is not tangential to $H$ at $z_H(x)$. Then, the gradient $\nabla u$ is an isotropically realizable electric field in $\RR^d$.
\end{Pro}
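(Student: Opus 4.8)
The plan is to build global ``flow-box'' coordinates adapted to $H$, in which $\nabla u$ becomes the last coordinate vector field; there the equation $\div(\si\nabla u)=0$ reduces to a one-dimensional transport equation along the time lines, which is solved explicitly. Fix an orthonormal basis $\tau_1,\dots,\tau_{d-1}$ of $\nu^\perp$, a point $p_0\in H$, write $g(y):=p_0+\sum_{j=1}^{d-1}y_j\,\tau_j$ for $y\in\RR^{d-1}$, and set
\beq
\Psi(y,t):=X\big(t,g(y)\big),\qquad(y,t)\in\RR^{d-1}\times\RR.
\eeq
The flow $X$ is complete (since $\nabla u$ is periodic, hence bounded) and of class $C^1$ in $(t,x)$ (since $u\in C^2$). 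The first task is to prove that $\Psi$ is a $C^1$-diffeomorphism onto $\RR^d$. Bijectivity comes straight from the hypotheses: every $x$ lies on its trajectory, which hits $H$ exactly once at $z_H(x)=X(\tau_H(x),x)$, so $x=\Psi\big(g^{-1}(z_H(x)),-\tau_H(x)\big)$ (surjectivity); and if $\Psi(y,t)=\Psi(y',t')$ then $g(y),g(y')$ lie on a common trajectory, hence $g(y)=g(y')$ by uniqueness of the intersection with $H$, and then $t=t'$ because $s\mapsto u(X(s,g(y)))$ is strictly increasing (its derivative is $|\nabla u|^2>0$) so the trajectory is not periodic (injectivity).

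The transversality assumption enters in showing that $\Psi$ is a local diffeomorphism. Writing $D_xX(t,x)$ for the Jacobian of $x\mapsto X(t,x)$, one has $\partial_{y_j}\Psi(y,t)=D_xX(t,g(y))\,\tau_j$ and $\partial_t\Psi(y,t)=\nabla u(\Psi(y,t))=D_xX(t,g(y))\,\nabla u(g(y))$, the last identity by differentiating $X(t+s,g(y))=X(t,X(s,g(y)))$ at $s=0$. Hence
\beq
D\Psi(y,t)=D_xX(t,g(y))\,\big[\,\tau_1\ \big|\ \cdots\ \big|\ \tau_{d-1}\ \big|\ \nabla u(g(y))\,\big].
\eeq
The first factor is invertible because $X(t,\cdot)$ is a diffeomorphism of $\RR^d$; the matrix in brackets is invertible because $g(y)\in H$, the trajectory through $g(y)$ meets $H$ only at $g(y)$, so the hypothesis gives $\nabla u(g(y))\cdot\nu\neq0$, i.e. $\nabla u(g(y))\notin\mathrm{span}(\tau_1,\dots,\tau_{d-1})$. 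Thus $D\Psi$ is everywhere invertible, so $\Psi$ is a local diffeomorphism, and a bijective local diffeomorphism between equidimensional manifolds is a global one. In particular $J:=|\det D\Psi|$ is continuous and positive on $\RR^{d-1}\times\RR$.

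Next, define the conductivity by $(\si\circ\Psi)(y,t):=|\nabla u(g(y))\cdot\nu|\,/\,J(y,t)$. By Liouville's identity $\det D_xX(t,z)=\exp\!\big(\int_0^t\Delta u(X(\tau,z))\,d\tau\big)$ this is exactly the explicit formula
\beq
\si\big(X(t,z)\big):=\exp\Big(-\int_0^t\Delta u\big(X(\tau,z)\big)\,d\tau\Big),\qquad z\in H,\ t\in\RR,
\eeq
i.e. the solution of the transport equation $\tfrac{d}{dt}\log\si(X(t,z))=-\Delta u(X(t,z))$ with $\si\equiv1$ on $H$. Since $\Psi$ is a bijection, $\si$ is a well-defined function on $\RR^d$; since $\Psi^{-1}$ is $C^1$, $J$ is continuous and positive, and $y\mapsto|\nabla u(g(y))\cdot\nu|$ is continuous and positive (by transversality), $\si$ is continuous and positive, so $\si\in L^\infty_{\mathrm{loc}}(\RR^d)$. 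To check $\div(\si\nabla u)=0$ in $\D'(\RR^d)$, take $\ph\in C^\infty_c(\RR^d)$, substitute $x=\Psi(y,t)$ in $\int_{\RR^d}\si\,\nabla u\cdot\nabla\ph\,dx$, and use $\nabla u(\Psi)=\partial_t\Psi$, so that $\nabla u(\Psi)\cdot(\nabla\ph)(\Psi)=\partial_t[\ph\circ\Psi]$, together with $(\si\circ\Psi)\,J=|\nabla u(g(y))\cdot\nu|=:c(y)$: the integral collapses to $\int_{\RR^{d-1}}c(y)\big(\int_{\RR}\partial_t[\ph(\Psi(y,t))]\,dt\big)\,dy$, which vanishes since $t\mapsto\ph(\Psi(y,t))$ has compact support for each $y$ ($\ph$ is compactly supported and $\Psi$ a homeomorphism). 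This gives the isotropic realizability in $\RR^d$; if $u$ is more regular (e.g. $C^3$) then $\si\in C^1$ and $\div(\si\nabla u)=0$ holds classically.

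\textbf{Main obstacle.} The crux is the global diffeomorphism property of $\Psi$, and within it the propagation of transversality along the flow: the identity $\nabla u(X(t,z))=D_xX(t,z)\,\nabla u(z)$ is what keeps the ``time direction'' $\partial_t\Psi$ linearly independent from the ``spatial directions'' $\partial_{y_j}\Psi$ for all $t$, not merely at $t=0$. A secondary point is that, with $u$ only of class $C^2$, the conductivity $\si$ produced is only continuous, so realizability must be verified distributionally through the above change of variables rather than by differentiating along trajectories.
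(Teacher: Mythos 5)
Your proof is correct, and it takes a genuinely different route from the paper's. The paper stays close to the local strategy of Theorem~\ref{thm.v1}\,$i)$: it introduces the $d-1$ first integrals $v_k(x)=z_H(x)\cdot\tau_k$, proves $\tau_H\in C^1$ by the implicit function theorem, shows that $(\nabla v_1,\dots,\nabla v_{d-1})$ has rank $d-1$ everywhere --- this is where the invertibility of $D_xX$ (Lemma~\ref{lem.DX}, via Liouville's formula) and the transversality at $z_H(x)$ enter --- and then takes $\si=|\nabla v_1\times\cdots\times\nabla v_{d-1}|/|\nabla u|$, the divergence-free property coming from Piola's identity for cross products of gradients. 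You instead package the same two ingredients (invertibility of $D_xX$ and transversality on $H$) into a single global flow-box diffeomorphism $\Psi(y,t)=X(t,g(y))$, with bijectivity supplied by the single-crossing hypothesis and the strict monotonicity of $u$ along trajectories, and you verify $\div(\si\nabla u)=0$ weakly by the change of variables $x=\Psi(y,t)$, using that $(\si\circ\Psi)J$ depends only on $y$ and that $t\mapsto\ph(\Psi(y,t))$ is compactly supported because $\Psi$ is a homeomorphism. What this buys is an explicit conductivity, $\si(X(t,z))=\exp\bigl(-\int_0^t\De u(X(s,z))\,ds\bigr)$ for $z\in H$, which is exactly the formula the paper only derives later with the equipotential $\{u=0\}$ in place of $H$ (Theorem~\ref{thm.isoreaRd}); it also dispenses with the cross-product/Piola machinery and with the explicit computation of $\nabla\tau_H$ in \refe{tH}.. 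The price is the extra care needed in the distributional verification, which you handle correctly. All hypotheses are used in essentially the same places, so the argument is sound.
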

\begin{Exa}\label{exa2b}
Go back to Example~\ref{exa2} with the function $u$ defined in $\RR^2$ by \refe{uexa2}.. The gradient field $\nabla u$ is smooth and  $Y_2$-periodic.
The solution of the dynamical system \refe{Xtx}. which reads as
\beq
\left\{\ba{ll}
\dis {dX_1\over dt}(t,x)=1, & X_1(0,x)=x_1,
\\ \ecart
\dis {dX_2\over dt}(t,x)=2\pi\sin\big(2\pi X_2(t,x)\big), & X_2(0,x)=x_2,
\ea\right.
\quad\mbox{for }t\in\RR,\ x\in\RR^2,
\eeq
is given explicitly by (see \reff{fig1}.)
\beq\label{Xtx1}
X(t,x)=\left\{\ba{cl}
\left(t+x_1\right)e_1+\left[n+{1\over\pi}\arctan\big(e^{4\pi^2 t}\tan(\pi x_2)\big)\right]e_2 & \mbox{if }x_2\in\left(n-{1\over 2},n+{1\over 2}\right)
\\ \ecart
\left(t+x_1\right)e_1+\left(n+{1\over 2}\right)e_2 & \mbox{if }x_2=n+{1\over 2},
\ea\right.
\eeq
where $n$ is an arbitrary integer.
\begin{figure}
\centering
\vskip -3.cm
\includegraphics[scale=.7]{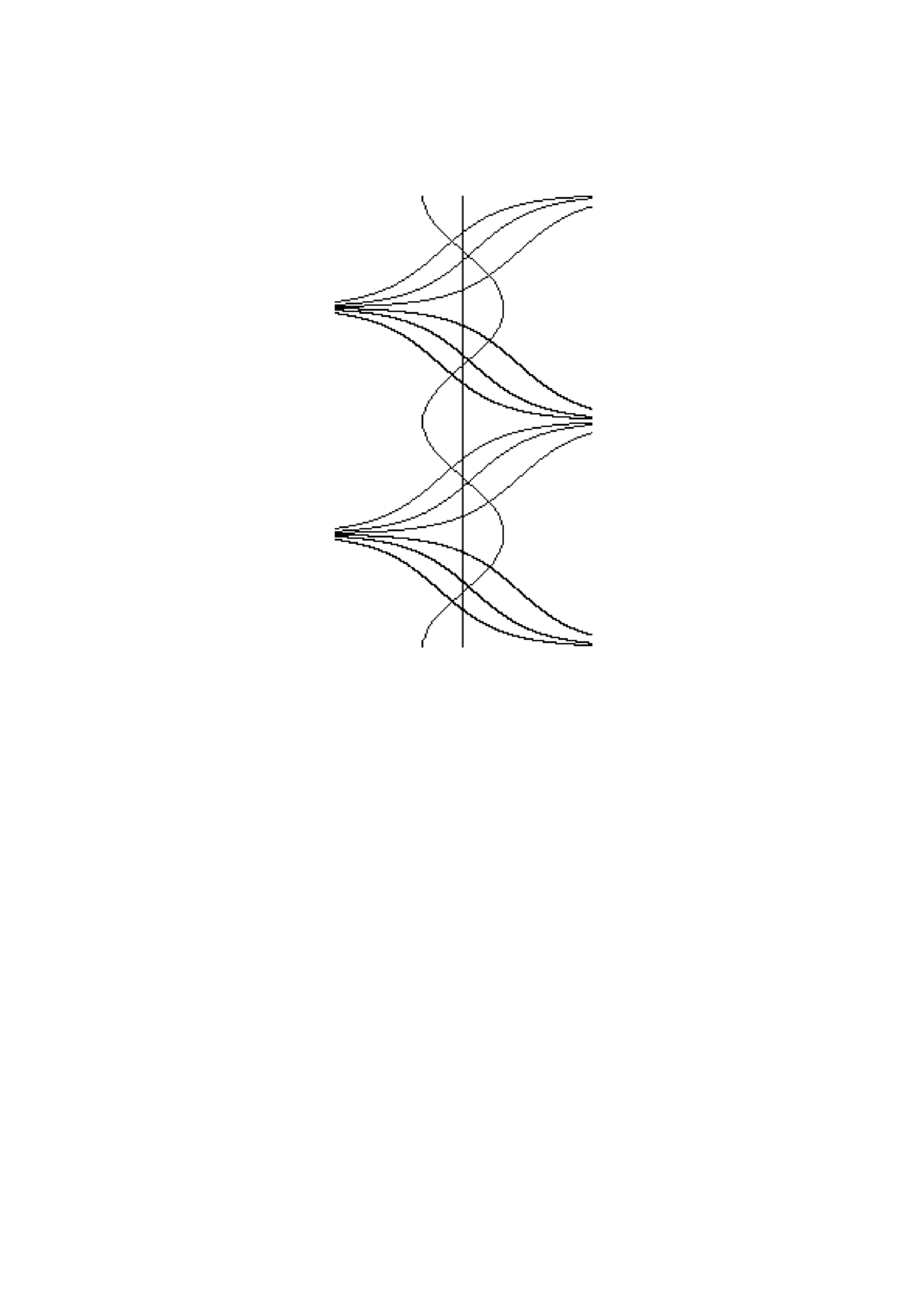}
\vskip -10.5cm
\caption{\it The trajectories crossing the line $\{x_1=0\}$ and the equipotential $\{u=0\}$}
\label{fig1}
\end{figure}
\par
Consider the line $\{x_1=0\}$ as the hyperplane $H$. Then, we have $\tau_H(x)=-\,x_1$.
Moreover, using successively the explicit formula \refe{Xtx1}. and the semigroup property \refe{Xstx}., we get that
\beq
X\big(\!-\!X_1(t,x),X(t,x)\big)=X\big(\!-t-x_1,X(t,x)\big)=X(-\,x_1,x),\quad\mbox{for any }t\in\RR.
\eeq
Hence, the function $v$ defined by $v(x):=X_2(-\,x_1,x)$ satisfies
\beq
v\big(X(t,x)\big)= X_2\big(\!-\!X_1(t,x),X(t,x)\big)=X_2(-\,x_1,x)=v(x),\quad\mbox{for any }t\in\RR.
\eeq
The function $v$ is thus a first integral of system \refe{Xtx}..
It follows that
\beq
{d\over dt}\left[v\big(X(t,x)\big)\right]=0=\nabla v\big(X(t,x)\big)\cdot{dX\over dt}(t,x)=\nabla v\big(X(t,x)\big)\cdot\nabla u\big(X(t,x)\big),
\eeq
which, taking $t=0$, implies that $\nabla u\cdot\nabla v=0$ in $\RR^2$.
Moreover, putting $t=-\,x_1$ in \refe{Xtx1}., we get that for any $n\in\ZZ$,
\beq\label{vn}
v(x)=\left\{\ba{cl}
n+{1\over\pi}\arctan\big(e^{-4\pi^2 x_1}\tan(\pi x_2)\big) & \mbox{if }x_2\in\left(n-{1\over 2},n+{1\over 2}\right)
\\ \ecart
n+{1\over 2} & \mbox{if }x_2=n+{1\over 2}.
\ea\right.
\eeq
Therefore, by \refe{v1}. $\nabla u$ is an isotropically realizable electric field in the whole space $\RR^2$, with the smooth conductivity
\beq
\si:={|\nabla v|\over |\nabla u|}=
\left\{\ba{cl}
\dis {1+\tan^2(\pi x_2)\over e^{4\pi^2 x_1}+e^{-4\pi^2 x_1}\tan^2(\pi x_2)} & \mbox{if }x_2\notin{1\over 2}+\ZZ
\\ \ecart
\dis e^{4\pi^2 x_1} & \mbox{if }x_2\in{1\over 2}+\ZZ.
\ea\right.
\eeq
It may be checked by a direct calculation that $\si\nabla u$ is divergence free in $\RR^2$.
\end{Exa}
\par\bs\noindent
{\bf Proof of Theorem~\ref{pro.isoreaH}.} Let $(\tau_1,\dots,\tau_{d-1})$ be an orthonormal basis of the hyperplane $H$.
Define for each $k\in\{1,\dots,d-1\}$, the function $v_k$ by
\beq\label{vktauk}
v_k(x):=z_H(x)\cdot\tau_k=X\big(\tau_H(x),x\big)\cdot\tau_k,\quad\mbox{for }x\in\RR^d.
\eeq
We shall prove that the functions $v_k$ satisfy the properties of the proof of Theorem~\ref{thm.v1}. $i)$.
\par
First, due the transversality of each trajectory across $H$, we have for any $x\in\RR^d$,
\beq
\left.{\partial\over\partial t}\big(X(t,x)\cdot\nu\big)\right|_{t=\tau_H(x)}=\nabla u\big(z_H(x)\big)\cdot\nu\neq 0.
\eeq
Hence, the implicit functions theorem combined with the $C^1$-regularity of $(t,x)\mapsto X(t,x)$ (see, {\em e.g.}, \cite{Arn}, Theorem~$\rm T'_r$ p.~222) implies that $x\mapsto \tau_H(x)$ defines a function in $C^1(\RR^d)$. Therefore, the functions $v_k$ defined by \refe{vktauk}. belong to $C^1(\RR)$.
\par
Second, since the trajectories satisfy the identity
\beq\label{Xstx}
X\big(s,X(t,x)\big)=X(s+t,x)\quad\forall\,s,t\in\RR,\ \forall\,x\in\RR^d,
\eeq
we get that $X\big(\tau_H(x)-t,X(t,x)\big)=X\big(\tau_H(x),x\big)$, and thus
\beq\label{tHx}
\tau_H\big(X(t,x)\big)=\tau_H(x)-t.
\eeq
It follows that for any $k\in\{1,\dots,d-1\}$,
\beq
v_k\big(X(t,x)\big)=X\big(\tau_H(x)-t,X(t,x)\big)\cdot\tau_k=X\big(\tau_H(x),x)\big)\cdot\tau_k,\quad\mbox{for any }t\in\RR.
\eeq
Therefore, each function $v_k$ is a first integral of the dynamical system \refe{Xtx}..
\par
Third, consider for some $x_0\in\Om$, a vector $(\la_1,\dots,\la_{d-1}\big)\in\RR^{d-1}$ such that
\beq\label{lakgradvk}
\sum_{k=1}^{d-1}\la_k\nabla v_k(x_0)=0,
\eeq
and define the function
\beq\label{v0tau0}
v_0:=\sum_{k=1}^{d-1}\la_k\,v_k=z_H\cdot\tau_0,\quad\mbox{where}\quad\tau_0:=\sum_{k=1}^{d-1}\la_k\,\tau_k.
\eeq
By the chain rule we have
\beq\label{DzH}
\ba{ll}
Dz_H(x) & \dis =\nabla \tau_H(x)\otimes{\partial X\over\partial t}\big(\tau_H(x),x\big)+D_x X\big(\tau_H(x),x\big)
\\ \ecart
& \dis =\nabla \tau_H(x)\otimes\nabla u\big(z_H(x)\big)+D_x X\big(\tau_H(x),x\big).
\ea
\eeq
This combined with the equality (recall that $z_H(x)\in H$)
\beq
0=\left.\nabla\big(z_H(x)\cdot\nu\big)\right|_{x=x_0}=Dz_H(x_0)\,\nu,
\eeq
implies that
\beq\label{tH}
\nabla \tau_H(x_0)={-1\over\nabla u\big(z_H(x_0)\big)\cdot\nu}\,D_x X\big(\tau_H(x_0),x_0\big)\,\nu.
\eeq
Hence, by the equalities \refe{lakgradvk}., \refe{v0tau0}. and again using \refe{DzH}. together with \refe{tH}., we get that
\beq\label{gradv0}
0=\nabla v_0(x_0)=Dz_H(x_0)\,\tau_0
=D_x X\big(\tau_H(x_0),x_0\big)\left(\tau_0-{\nabla u\big(z_H(x_0)\big)\cdot\tau_0\over\nabla u\big(z_H(x_0)\big)\cdot\nu}\,\nu\right).
\eeq
However, by Lemma~\ref{lem.DX} below, the matrix $D_x X\big(\tau_H(x_0),x_0\big)$ is invertible.
This combined with \refe{gradv0}. yields that $\tau_0$ is proportional to $\nu$.
Hence, $\tau_0=0$ and $(\nabla v_1,\dots,\nabla v_{d-1})$ has rank $(d-1)$ everywhere in $\RR^d$.
Therefore, the continuous positive conductivity defined by  \refe{sigradvk}. shows that $\nabla u$ is isotropically realizable in the whole space $\RR^d$.
\cqfd
\begin{Lem}\label{lem.DX}
The derivative $D_xX$ of the dynamical system \refe{Xtx}. is invertible in $\RR\times\RR^d$.
\end{Lem}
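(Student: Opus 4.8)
The plan is to deduce invertibility from the group property \eqref{Xstx} of the flow, which is the standard reason why the time-$t$ map of an autonomous ODE is a diffeomorphism.

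First I would recall that the flow is globally well defined: since $\nabla u\in C^1_\sharp(Y)^d$ is periodic and $C^1$, it is bounded and globally Lipschitz on $\RR^d$, so by the Cauchy--Lipschitz theorem the solution $X(t,x)$ of \eqref{Xtx} exists for all $(t,x)\in\RR\times\RR^d$; moreover, since $u\in C^2(\RR^d)$ the vector field $\nabla u$ is $C^1$, hence $(t,x)\mapsto X(t,x)$ is $C^1$ on $\RR\times\RR^d$ with $D_xX$ continuous (see \cite{Arn}, Theorem~$\rm T'_r$ p.~222, as already invoked above), and $D_xX(0,x)=I_d$ because $X(0,x)=x$. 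Then I would differentiate the semigroup identity \eqref{Xstx}, namely $X\big(s,X(t,x)\big)=X(s+t,x)$, with respect to the space variable $x$; by the chain rule,
\beq
D_xX\big(s,X(t,x)\big)\,D_xX(t,x)=D_xX(s+t,x),\quad\forall\,s,t\in\RR,\ \forall\,x\in\RR^d.
\eeq
Taking $s=-t$, the right-hand side equals $D_xX(0,x)=I_d$, so $D_xX\big(-t,X(t,x)\big)$ is a left inverse of the square matrix $D_xX(t,x)$, hence a two-sided inverse; applying the same identity with $t$ replaced by $-t$ and base point $X(t,x)$, together with $X\big(-t,X(t,x)\big)=x$, confirms that the other product is $I_d$ as well. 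Therefore $D_xX(t,x)$ is invertible for every $(t,x)\in\RR\times\RR^d$.

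There is essentially no obstacle here; the only points deserving a word of care are the global existence in time and the $C^1$-dependence of the flow, both guaranteed by the periodicity of $\nabla u$ and the $C^2$-regularity of $u$. Alternatively, one can argue by Liouville's formula: the Jacobian $J(t,x):=\det D_xX(t,x)$ solves $\partial_tJ=\big(\De u\big)\!\big(X(t,x)\big)\,J$ with $J(0,x)=1$, whence $J(t,x)=\exp\!\big(\int_0^t\De u\big(X(s,x)\big)\,ds\big)>0$ for all $(t,x)$, which gives the same conclusion.
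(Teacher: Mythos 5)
Your proof is correct, but your primary argument is genuinely different from the paper's. The paper proves the lemma by writing the variational equation satisfied by $D_xX$ along the flow and invoking Liouville's formula, which gives the explicit positive expression $\det\left(D_xX\right)(t,x)=\exp\big(\int_0^t \tr\big[\nabla^2 u\big(X(s,x)\big)\big]\,ds\big)$ — exactly the alternative you sketch in your last sentence. Your main route instead differentiates the group identity \eqref{Xstx} in $x$ to exhibit $D_xX\big(-t,X(t,x)\big)$ as an explicit two-sided inverse of $D_xX(t,x)$; this is the standard "time-$t$ map is a diffeomorphism" argument, it is softer (it never uses that the vector field is a gradient, nor does it need the Hessian beyond the $C^1$-dependence of the flow), and your preliminary remarks on global existence and $C^1$-dependence, which the paper leaves implicit, are the right hypotheses to check. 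What the paper's Liouville computation buys in exchange is quantitative information — an exact formula for the Jacobian in terms of $\De u$ along trajectories — which is of the same flavor as the weight $\exp\big(\int_0^{\tau(x)}\De u(X(s,x))\,ds\big)$ used later in Theorem~\ref{thm.isoreaRd} and Theorem~\ref{thm.isoreaY}, so it fits the surrounding development more tightly; but for the bare statement of the lemma both arguments are complete and correct.
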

\begin{proof}
By the chain rule the matrix field $D_xX$ satisfies the variational equation
\beq
\left\{\ba{rl}
\dis {d\over dt}D_xX(t,x) & =D_xX(t,x)\,\nabla^2 u\big(X(t,x)\big)
\\ \ecart
D_xX(0,x) & =I_d,
\ea\right.
\quad\mbox{for }t\in\RR,\ x\in\RR^d,
\eeq
where $\nabla^2 u$ denotes the Hessian matrix of $u$ and $I_d$ is the unit matrix of $\RR^{d\times d}$.
Moreover, due to the multi-linearity of the determinant $\det\left(D_xX\right)$ satisfies Liouville's formula
\beq
\left\{\ba{rl}
\dis {d\over dt}\det\left(D_xX\right)(t,x) & ={\rm tr }\left[\nabla^2 u\big(X(t,x)\big)\right]\det\left(D_xX\right)(t,x)
\\ \ecart
\det\left(D_xX\right)(0,x) & =1,
\ea\right.
\quad\mbox{for }t\in\RR,\ x\in\RR^d,
\eeq
where tr denotes the trace. It follows that
\beq
\det\left(D_xX\right)(t,x)=\exp\left(\int_0^t {\rm tr }\left[\nabla^2 u\big(X(s,x)\big)\right]ds\right)>0\quad\mbox{for any }(t,x)\in\RR\times\RR^d,
\eeq
which shows that $D_xX(t,x)$ is invertible.
\end{proof}
\begin{Rem}\label{rem.H}
The hyperplane assumption of Theorem~\ref{pro.isoreaH} does not hold in general. Indeed, we have the following heuristic argument:
\par
Let $H$ be a line of $\RR^2$, and let $\Si$ be a smooth curve of $\RR^2$ having an $S$-shape across $H$. Consider a smooth periodic isotropic conductivity $\si$ which is very small in the neighborhood of~$\Si$. Let $u$ be a smooth potential solution of $\div\left(\si\nabla u\right)=0$ in $\RR^2$ satisfying \refe{graduper}., \refe{gradu­0}., and let $v$ be the associated stream function satisfying $\si\nabla u=R_\perp\nabla v$ in $\RR^2$.
The potential $v$ is solution of $\div\left(\si^{-1}\nabla v\right)=0$ in $\RR^2$.
Then, since $\si^{-1}$ is very large in the neighborhood of $\Si$, the curve $\Si$ is close to an equipotential of $v$ and thus close to a current line of $u$.
Therefore, some trajectory of \refe{Xtx}. has an $S$-shape across $H$. This makes impossible the regularity of the time $\tau_H$ which is actually a multi-valued function.
\end{Rem}
\subsubsection{Isotropic realizability in the whole space}
Replacing a hyperplane by an equipotential (see \reff{fig1}. above) we have the more general result:
\begin{Thm}\label{thm.isoreaRd}
Let $u$ be a function in $C^3(\RR^d)$ such that $\nabla u$ satisfies \refe{graduper}. and \refe{gradu­0}..
Then, the gradient field $\nabla u$ is an isotropically realizable electric field in $\RR^d$.
\end{Thm}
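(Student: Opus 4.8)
The plan is to run the argument of \refP{pro.isoreaH}. with the transversal hyperplane $H$ — whose existence cannot be taken for granted, cf.\ \refR{rem.H}. — replaced by the equipotential $\{u=0\}$, which is automatically transversal to $\nabla u$ because $\nabla u$ is everywhere normal to the level sets of $u$ and, by \refe{gradu�0}., never vanishes. Rather than producing $d-1$ first integrals of the flow \refe{Xtx}. and invoking the divergence-free cross product (which would force $\{u=0\}$ to carry $d-1$ globally defined functions with pointwise independent gradients, a genuine obstruction in general), I would write $\si$ down directly by prescribing the value $1$ on $\{u=0\}$ and transporting it off that surface along the trajectories of \refe{Xtx}.; this is the announced closed formula for $\si$.

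First I would record the basic properties of the flow: since $\nabla u\in C^1_\sharp(Y)^d$ is bounded, $X$ is globally defined on $\RR\times\RR^d$ and is $C^1$ in $(t,x)$ — indeed $C^2$, using $u\in C^3$ — and satisfies the semigroup identity \refe{Xstx}.. Along a trajectory one has
\beq
{d\over dt}\,u\big(X(t,x)\big)=\big|\nabla u\big(X(t,x)\big)\big|^2\ \geq\ \delta^2>0,\qquad\delta:=\min_Y|\nabla u|>0,
\eeq
the strict positivity of $\delta$ coming from \refe{graduper}. and \refe{gradu�0}.; hence $t\mapsto u(X(t,x))$ is an increasing bijection of $\RR$ onto $\RR$, so each trajectory meets $\{u=0\}$ at a single time $\tau_0(x)$, characterized by $u\big(X(\tau_0(x),x)\big)=0$ and satisfying $|\tau_0(x)|\leq|u(x)|/\delta^2$. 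The implicit function theorem, applied to $(t,x)\mapsto u(X(t,x))$ at $t=\tau_0(x)$ where its $t$-derivative is nonzero, gives $\tau_0\in C^1(\RR^d)$. Finally, the semigroup identity together with the uniqueness of $\tau_0$ yields, exactly as for \refe{tHx}., the cocycle relation $\tau_0\big(X(t,x)\big)=\tau_0(x)-t$ for all $t\in\RR$, $x\in\RR^d$.

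Then I would set
\beq
\si(x):=\exp\left(\int_0^{\tau_0(x)}\Delta u\big(X(s,x)\big)\,ds\right)>0,\qquad x\in\RR^d,
\eeq
which belongs to $C^1(\RR^d)$ since $\Delta u\in C^1_\sharp(Y)$ when $u\in C^3$ (and still defines a positive $\si\in L^\infty_{\rm loc}(\RR^d)$ under the weaker hypothesis $u\in C^2$, in which case $\div(\si\nabla u)=0$ is meant in $\D'(\RR^d)$). To verify $\div(\si\nabla u)=0$, I would differentiate $t\mapsto\log\si\big(X(t,x)\big)$ in two ways: the chain rule gives $\nabla\log\si\big(X(t,x)\big)\cdot\nabla u\big(X(t,x)\big)$, while inserting the cocycle relation and the semigroup identity into the defining formula gives $\log\si\big(X(t,x)\big)=\int_t^{\tau_0(x)}\Delta u\big(X(r,x)\big)\,dr$, whose $t$-derivative is $-\Delta u\big(X(t,x)\big)$. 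Taking $t=0$ yields $\nabla\log\si\cdot\nabla u=-\Delta u$ on $\RR^d$, hence $\div(\si\nabla u)=\si\big(\nabla\log\si\cdot\nabla u+\Delta u\big)=0$, so $\nabla u$ is isotropically realizable in $\RR^d$. I expect the only points needing care to be the uniform lower bound $\delta>0$, which is exactly what forces every trajectory to cross $\{u=0\}$ once (and only once), and the regularity accounting that makes $C^3$ — rather than $C^2$ — the natural hypothesis for a $C^1$ conductivity; the concluding identity is then a one-line computation.
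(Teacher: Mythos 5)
Your proposal is correct and follows essentially the same route as the paper: the unique hitting time $\tau(x)$ of the equipotential $\{u=0\}$ (obtained from $\frac{d}{dt}u(X(t,x))=|\nabla u(X(t,x))|^2\geq m>0$ and the implicit function theorem), the cocycle relation $\tau(X(t,x))=\tau(x)-t$, and the very same conductivity $\si=\exp\big(\int_0^{\tau(x)}\Delta u(X(s,x))\,ds\big)$ verified by differentiating $\log\si$ along the flow. The only cosmetic differences are your explicit bound $|\tau(x)|\leq|u(x)|/\delta^2$ and the slightly weaker (but still sufficient) $C^1$ regularity you claim for $\tau$, where the paper records $C^2$.
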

\begin{proof}
On the one hand, for a fixed $x\in\RR^d$, define the function $f:\RR\to\RR$ by $f(t):=u\big(X(t,x)\big)$, for $t\in\RR$.
The function $f$ is in $C^3(\RR)$, and
\beq
f'(t)={dX\over dt}(t,x)\cdot\nabla u\big(X(t,x)\big)=\left|\nabla u\big(X(t,x)\big)\right|^2,\quad\forall\,t\in\RR.
\eeq
Since $\nabla u$ is periodic, continuous and does not vanish in $\RR^d$, there exists a constant $m>0$ such that $f'\geq m$ in $\RR$.
It follows that
\beq
{f(t)-f(0)\over t}\geq m,\quad\forall\,t\in\RR\setminus\{0\},
\eeq
which implies that
\beq
\lim_{t\to\infty}f(t)=\infty\quad\mbox{and}\quad\lim_{t\to-\infty}f(t)=-\infty.
\eeq
This combined with the monotonicity and continuity of $f$ thus shows that there exists a unique $\tau(x)\in\RR$ such that
\beq\label{t(x)}
u\big(X(\tau(x),x)\big)=0.
\eeq
\par
On the other hand, similar to the hyperplane case, we have that for any $x\in\RR^d$,
\beq
\left.{\partial\over\partial t}\left[u\big(X(t,x)\big)\right]\right|_{t=\tau(x)}=\left|\nabla u\big(X(\tau(x),x)\big)\right|^2>0.
\eeq
Hence, from the implicit functions theorem combined with the $C^2$-regularity of $(t,x)\mapsto u\big(X(t,x)\big)$ we deduce that $x\mapsto \tau(x)$ is a function in $C^2(\RR^d)$.
\par
Now define the function $w$ in $\RR^d$ by
\beq\label{wuX}
w(x):=\int_0^{\tau(x)}\De u\big(X(s,x)\big)\,ds,\quad\mbox{for }x\in\RR^d,
\eeq
which belongs to $C^1(\RR^d)$ since $u\in C^3(\RR^d)$.
Then, using \refe{Xstx}., \refe{tHx}. and the change of variable $r:=s+t$, we have for any $(t,x)\in\RR\times\RR^d$,
\beq
w\big(X(t,x)\big)=\int_0^{\tau(x)-t}\De u\big(X(s+t,x)\big)\,ds=\int_t^{\tau(x)}\De u\big(X(r,x)\big)\,dr,
\eeq
which implies that
\beq\label{graduw}
{\partial\over\partial t}\left[w\big(X(t,x)\big)\right]=\nabla w\big(X(t,x)\big)\cdot\nabla u\big(X(t,x)\big)=-\De u\big(X(t,x)\big).
\eeq
Finally, define the conductivity $\si$ by
\beq\label{siwu}
\si(x):=e^{w(x)}=\exp\left(\int_0^{\tau(x)}\De u\big(X(s,x)\big)\,ds\right),\quad\mbox{for }x\in\RR^2,
\eeq
which belongs to $C^1(\RR^d)$.
Applying \refe{graduw}. with $t=0$, we obtain that
\beq
\div\left(\si\nabla u\right)=e^w\left(\nabla w\cdot\nabla u+\De u\right)=0\quad\mbox{in }\RR^d,
\eeq
which concludes the proof.
\end{proof}
\begin{Rem}\label{rem.gradu­0}
In the proof of Theorem~\ref{thm.isoreaRd} the condition that $\nabla u$ is non-zero everywhere is essential to obtain both:
\par
- the uniqueness of the time $\tau(x)$ for each trajectory to reach the equipotential $\{u=0\}$,
\par
- the regularity of the function $x\mapsto \tau(x)$.
\end{Rem}
\subsubsection{Isotropic realizability in the torus}
We have the following characterization of the isotropic realizability in the torus:
\begin{Thm}\label{thm.isoreaY}
Let $u$ be a function in $C^3(\RR^d)$ such that $\nabla u$ satisfies \refe{graduper}. and \refe{gradu­0}..
\par\noindent
Then, the gradient field $\nabla u$ is isotropically realizable with a positive conductivity $\si\in L^\infty_\sharp(Y)$, with $\si^{-1}\in L^\infty_\sharp(Y)$, if there exists a constant $C>0$ such that
\beq\label{bdDuX}
\forall\,x\in\RR^d,\quad\left|\,\int_0^{\tau(x)}\De u\big(X(t,x)\big)\,dt\,\right|\leq C,
\eeq
where $X(t,x)$ is defined by \refe{Xtx}. and $\tau(x)$ by \refe{t(x)}..
\par\noindent
Conversely, if $\nabla u$ is isotropically realizable with a positive conductivity $\si\in C^1_\sharp(Y)$, then the boundedness \refe{bdDuX}. holds.
\end{Thm}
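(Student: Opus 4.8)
\medskip\noindent\textbf{Proof proposal.}

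The converse implication is the easy direction, so I would settle it first. If $\nabla u$ is isotropically realizable with $\si\in C^1_\sharp(Y)$, $\si>0$, put $w:=\log\si\in C^1_\sharp(Y)$; being continuous and periodic it is bounded, say $\|w\|_{L^\infty}=M$. Expanding $\div(\si\nabla u)=e^w\big(\nabla w\cdot\nabla u+\De u\big)=0$ yields the transport equation $\nabla w\cdot\nabla u=-\De u$ in $\RR^d$, so along a trajectory of \eqref{Xtx} the chain rule gives ${d\over dt}\big[w(X(t,x))\big]=\nabla w(X(t,x))\cdot\nabla u(X(t,x))=-\De u(X(t,x))$. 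Integrating in $t$ from $0$ to $\tau(x)$ and using $X(0,x)=x$, one gets $\int_0^{\tau(x)}\De u(X(t,x))\,dt=w(x)-w\big(X(\tau(x),x)\big)$, whose modulus is $\le 2M$ uniformly in $x$; this is precisely \eqref{bdDuX}.

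For the direct implication I would start from the realization of Theorem~\ref{thm.isoreaRd}: with $w_0(x):=\int_0^{\tau(x)}\De u(X(s,x))\,ds$ the isotropic conductivity $\si_0:=e^{w_0}\in C^1(\RR^d)$ satisfies $\div(\si_0\nabla u)=0$, and \eqref{bdDuX} is nothing but the two-sided bound $e^{-C}\le\si_0\le e^{C}$ on $\RR^d$ (equivalently, $\si_0\,dx$ is a flow-invariant measure on $\RR^d$ uniformly comparable to Lebesgue measure, and one is looking for such an invariant measure which is periodic). The one missing property is periodicity of $\si_0$. The key point is that admissibility is stable under lattice translations: since $\nabla u$ and $\De u$ are $Y$-periodic, for every period vector $p$ the function $w_0(\cdot+p)$ again solves $\nabla w\cdot\nabla u=-\De u$, so $\si_0(\cdot+p)=e^{w_0(\cdot+p)}$ is again an admissible isotropic conductivity, still with values in $[e^{-C},e^{C}]$.

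I would then periodize by averaging over an exhausting sequence of lattice boxes. Let $\La$ be the period lattice of $Y$, let $\La_N$ be the points of $\La$ lying in $NY$, and set $\rho_N:=(\#\La_N)^{-1}\sum_{p\in\La_N}\si_0(\cdot+p)$. By linearity $\div(\rho_N\nabla u)=0$, one still has $e^{-C}\le\rho_N\le e^{C}$, and for each fixed $q\in\La$ the symmetric difference $(\La_N+q)\,\triangle\,\La_N$ has $O(N^{d-1})$ points against $\#\La_N\sim N^{d}$, so $\|\rho_N(\cdot+q)-\rho_N\|_{L^\infty(\RR^d)}=O(1/N)$. Passing to a weak-$*$ limit $\si\in L^\infty(\RR^d)$ of a subsequence of $(\rho_N)$, one checks that $\si$ inherits $e^{-C}\le\si\le e^{C}$, that it is genuinely $Y$-periodic (the translation defect, being $O(1/N)$, vanishes in the limit), and that $\div(\si\nabla u)=0$ still holds in $\D'(\RR^d)$ (test against $\nabla\ph$ with $\ph\in C^\infty_c(\RR^d)$, using that $\nabla u\cdot\nabla\ph\in L^1$). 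Hence $\si\in L^\infty_\sharp(Y)$, $\si^{-1}\in L^\infty_\sharp(Y)$, $\si>0$, so $\nabla u$ is isotropically realizable in the torus.

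The transport-equation computations and the weak-$*$ continuity checks are routine. The genuine difficulty, concentrated in the direct implication, is that the realization of Theorem~\ref{thm.isoreaRd} is bounded above and below exactly under \eqref{bdDuX} but is intrinsically non-periodic; the averaging-over-the-lattice device together with weak-$*$ compactness is the mechanism that manufactures a periodic realization out of it, and it is precisely the two-sided bound coming from \eqref{bdDuX} that keeps the averaged conductivities uniformly elliptic, so that both $\si$ and $\si^{-1}$ remain in $L^\infty_\sharp(Y)$.
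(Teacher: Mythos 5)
Your proposal is correct and follows essentially the same route as the paper: the necessary direction via the transport equation $\nabla w\cdot\nabla u=-\De u$ for $w=\ln\si$ integrated along trajectories, and the sufficient direction by averaging lattice translates of the whole-space realization $\si_0=e^{w_0}$ from Theorem~\ref{thm.isoreaRd}, using the $O(N^{d-1})$ boundary count to get asymptotic periodicity and weak-$*$ compactness to extract a periodic, uniformly elliptic limit satisfying $\div(\si\nabla u)=0$. This is precisely the paper's argument (with $[-n,n]^d\cap\ZZ^d$ in place of your $\La_N$).
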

\begin{Exa}\label{exa2c}
For the function $u$ of Example~\ref{exa2b} and for $x=(x_1,0)$, we have by \refe{bdDuX}. and \refe{Xtx1}.,
\[
\si_0(x)=\exp\left(4\pi^2\int_0^{\tau(x)}\cos\big(2\pi X_2(s,x)\big)\,ds\right)=\exp\left(4\pi^2\,\tau(x)\right),
\]
and by \refe{t(x)}.,
\[
X_1\big(\tau(x),x\big)=\tau(x)+x_1=\cos\big(2\pi X_2(\tau(x),x)\big)=1.
\]
Therefore, we get that $\si_0(x_1,0)=\exp\big(4\pi^2\,(1-x_1)\big)$, which contradicts the boundedness \refe{bdDuX}..
This is consistent with the negative conclusion of Example~\ref{exa2}.
\end{Exa}
\noindent
{\bf Proof of Theorem~\ref{thm.isoreaY}.}
\par\ss\noindent
{\it Sufficient condition:}
Without loss of generality we may assume that the period is $Y=[0,1]^d$.
Define the function $\si_0$ by
\beq\label{si0}
\si_0(x):=\exp\left(\int_0^{\tau(x)}\De u\big(X(t,x)\big)\,dt\right),\quad\mbox{for }x\in\RR^d,
\eeq
and consider for any integer $n\geq 1$, the conductivity $\sigma_n$ defined by the average over the $(2n+1)^d$ integer vectors of $[-n,n]^d$:
\beq\label{sin}
\sigma_n(x):={1\over(2n+1)^d}\sum_{k\in \ZZ^d\cap[-n,n]^d}\sigma_0(x+k),\quad\mbox{for }x\in\RR^d.
\eeq
\par
On the one hand, by \refe{bdDuX}. $\sigma_n$ is bounded in $L^\infty(\RR^d)$.
Hence, there is a subsequence of $n$, still denoted by $n$, such that $\sigma_n$ converges weakly-$*$ to some function $\sigma$ in $L^\infty(\RR^d)$.
Moreover, we have for any $x\in\RR^d$ and any $k\in\ZZ^d$ (denoting $\dis |k|_\infty:=\max_{1\leq i\leq d}|k_i|$),
\beq
\ba{ll}
\left|\,(2n+1)^d\,\sigma_n(x+k)-(2n+1)^d\,\sigma_n(x)\,\right| &
\dis =\Big|\,\sum_{|j-k|_\infty\leq n}\sigma_0(x+j)-\sum_{|j|_\infty\leq n}\sigma_0(x+j)\,\Big|
\\ \ecart
& \dis \leq\sum_{\ba{c} ^{|j-k|_\infty\leq n} \\[-.15cm] ^{|j|_\infty>n} \ea}\kern -.2cm\sigma_0(x+j)
+\kern -.2cm\sum_{\ba{c} ^{|j-k|_\infty> n} \\[-.15cm] ^{|j|_\infty\leq n} \ea}\kern -.2cm\sigma_0(x+j)
\\
& \leq C\,n^{d-1},
\ea
\eeq
where $C$ is a constant independent of $n$ and $x$.
This implies that $\sigma(\cdot+k)=\si(\cdot)$ a.e. in $\RR^d$, for any $k\in\ZZ^d$. The function $\sigma$ is thus $Y$-periodic and belongs to $L^\infty_\sharp(Y)$. Moreover, since by virtue of \refe{bdDuX}. and \refe{si0}. $\si_0$ is bounded from below by $e^{-C}$, so is $\si_n$ and its limit $\si$. Therefore, $\si^{-1}$ also belongs to $L^\infty_\sharp(Y)$.
\par
On the other hand, by virtue of Theorem~\ref{thm.isoreaRd} the gradient field $\nabla u$ is realizable in $\RR^d$ with the conductivity $\si_0$. This combined with the $Y$-periodicity of $\nabla u$ yields $\div\left(\sigma_n\nabla u\right)=0$ in~$\RR^d$.
Hence, using the weak-$*$ convergence of $\si_n$ in $L^\infty(\RR^d)$ we get that for any $\ph\in C^\infty_c(\RR^d)$, $\nabla u\cdot\nabla\ph\in L^1(\RR^d)$ and 
\beq
0=\lim_{n\to\infty}\int_{\RR^d}\si_n\nabla u\cdot\nabla\ph\,dx=\int_{\RR^d}\si\nabla u\cdot\nabla\ph\,dx.
\eeq
Therefore, we obtain that $\div\left(\sigma\nabla u\right)=0$ in $\D'(\RR^d)$, so that $\nabla u$ is isotropically realizable with the $Y$-periodic bounded conductivity $\sigma$.
\par\bs\noindent
{\it Necessary condition:}
Let $\si$ be a positive function in $C^1_\sharp(Y)$ such that $\div\left(\si\nabla u\right)=0$ in $\RR^d$.
Then, the function $w:=\ln \si$ also belongs to $C^1_\sharp(Y)$, and solves the equation $\nabla w\cdot\nabla u+\De u=0$ in $\RR^d$.
Therefore, using \refe{Xtx}. we obtain that for any $x\in\RR^d$,
\beq
\ba{ll}
\dis \int_0^{\tau(x)}\De u\big(X(t,x)\big)\,dt & \dis =-\int_0^{\tau(x)}\nabla w\big(X(t,x)\big)\cdot\nabla u\big(X(t,x)\big)\,dt
\\ \ecart
& \dis =-\int_0^{\tau(x)}\nabla w\big(X(t,x)\big)\cdot{dX\over dt}(t,x)\,dt
\\ \ecart
& =w\big(X(0,x)\big)-w\big(X(\tau(x),x)\big)=w(x)-w\big(X(\tau(x),x)\big),
\ea
\eeq
which implies \refe{bdDuX}. due to the boundedness of $w$ in $\RR^d$. \cqfd
\begin{Rem}\label{rem.regsi}
If we also assume that $\si_0$ of \refe{bdDuX}. is uniformly continuous in $\RR^d$, then the previous proof combined with Ascoli's theorem implies that the conductivity $\si$ is continuous. Indeed, the sequence $\si_n$ defined by \refe{sin}. is then equi-continuous.
\end{Rem}
\section{The matrix field case}\label{s.mat}
\begin{Def}\label{def.mfield}
Let $\Om$ be an (bounded or not) open set of $\RR^d$, $d\geq 2$, and let $U=(u_1,\dots,u_d)$ be a function in $H^1_{\rm}(\Om)^d$. The matrix-valued field $DU$ is said to be a realizable matrix-valued electric field in $\Om$ if there exists a symmetric positive definite matrix-valued $\si\in L^\infty_{\rm loc}(\Om)^{d\times d}$ such that
\beq
\Div\left(\si DU\right)=0\quad\mbox{in }\D'(\Om).
\eeq
\end{Def}
\subsection{The periodic framework}
\begin{Thm}\label{thm.m1}
Let $Y$ be a closed parallelepiped of $\RR^d$, $d\geq 2$. Consider a function $U\in C^1(\RR^d)^d$ such that
\beq\label{DUper}
DU\mbox{ is $Y$-periodic}\quad\mbox{and}\quad\det\big(\langle DU\rangle\big)\neq 0.
\eeq
\begin{itemize}
\item[$i)$] Assume that
\beq\label{detDU>0}
\det\big(\langle DU\rangle DU\big)>0\quad\mbox{everywhere in }\RR^d.
\eeq
Then, $DU$ is a realizable electric matrix field in $\RR^d$ associated with a continuous conductivity.
\item[$ii)$] Assume that $d=2$, and that $DU$ is a realizable electric matrix field in $\RR^2$, satisfying \refe{DUper}. and associated with a smooth conductivity in $\RR^2$. Then, condition \refe{detDU>0}. holds true.
\item[$iii)$] In dimension $d=3$, there exists a smooth matrix field $DU$ satisfying \refe{DUper}. and associated with a smooth periodic conductivity, such that $\det\left(DU\right)$ takes positive and negative values in $\RR^3$.
\end{itemize}
\end{Thm}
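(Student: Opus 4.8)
\emph{Strategy.} Part $iii)$ is essentially contained in the proof of Theorem~\ref{thm.v1}~$iii)$. Part $i)$ is a direct application of Piola's identity $\Div\big(\mathrm{cof}(DU)\big)=0$, which is the matrix counterpart of the divergence-free cross products used in the proof of Theorem~\ref{thm.v1}~$i)$ via Theorem~3.2 of \cite{Dac}. Part $ii)$ is the matrix version of the necessary condition in Theorem~\ref{thm.v2} and again rests on \cite{AlNe}.

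\emph{Part $i)$.} Condition \refe{detDU>0}. says precisely that $\det(DU)$ is nowhere zero and keeps the constant sign $\ep\in\{\pm1\}$ fixed by $\ep\,\det\langle DU\rangle>0$, so $DU(x)$ is invertible for every $x$. I would take
\beq
\si:=|\det(DU)|\,\big(DU\,(DU)^T\big)^{-1}=\ep\,\mathrm{cof}(DU)\,(DU)^{-1}\qquad\mbox{in }\RR^d,
\eeq
using $\mathrm{cof}(DU)=\det(DU)\,\big((DU)^T\big)^{-1}$ and $\ep\,\det(DU)=|\det(DU)|$. Since $DU\in C^0(\RR^d)^{d\times d}$ is everywhere invertible, $DU\,(DU)^T$ is continuous, symmetric and positive definite, hence so is $\si$; moreover $\si$ is $Y$-periodic. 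One checks at once that $\si\,DU=\ep\,\mathrm{cof}(DU)$, so it remains to show $\Div\big(\mathrm{cof}(DU)\big)=0$ in $\D'(\RR^d)$. For $U\in C^2$ this is Piola's identity (a pairwise cancellation of mixed second derivatives; equivalently, Theorem~3.2 of \cite{Dac} applied column by column, the columns of $\mathrm{cof}(DU)$ being, up to sign, the cross products of the $\nabla U_k$); for $U$ merely of class $C^1$ one approximates $U$ by smooth maps in $C^1_{\rm loc}$, which makes $\mathrm{cof}(DU)$ converge in $C^0_{\rm loc}$ and hence in $\D'$, so the identity passes to the limit by continuity of $\Div$. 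Thus $\Div(\si DU)=\ep\,\Div\big(\mathrm{cof}(DU)\big)=0$, which proves $i)$; replacing $DU\,(DU)^T$ by $DU\,\La\,(DU)^T$ for an arbitrary constant symmetric positive definite $\La$ gives the infinitely many realizations mentioned in the introduction.

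\emph{Part $ii)$.} Let $\si$ be a smooth symmetric positive definite conductivity on $\RR^2$ with $\Div(\si DU)=0$. I would first show that $\det(DU)$ never vanishes: if $\det(DU)(y_0)=0$, pick $\xi\in\RR^2\setminus\{0\}$ with $DU(y_0)\,\xi=0$ and set $w:=U\cdot\xi$; then $w$ is smooth, $\div(\si\nabla w)=\Div(\si DU)\cdot\xi=0$ in $\RR^2$, $\nabla w$ is $Y$-periodic with $\langle\nabla w\rangle=\langle DU\rangle\,\xi\neq0$ (as $\langle DU\rangle$ is invertible), yet $\nabla w(y_0)=0$, contradicting Proposition~2 of \cite{AlNe} (the smooth case) exactly as in the necessary part of the proof of Theorem~\ref{thm.v2}. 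The reason this works is that the critical points of a $\si$-harmonic function in $\RR^2$ with smooth $\si$ are isolated with strictly negative index, so by Poincar\'e--Hopf on the torus $\TT^2$ (which carries the periodic vector field $\nabla w$ and has zero Euler characteristic) there can be none; this uses only the smoothness, not the periodicity, of $\si$. Hence $\det(DU)$ is continuous and nowhere zero on the connected set $\RR^2$, so it has a constant sign. Finally, the $2\times2$ Jacobian is a null Lagrangian: writing $DU=\langle DU\rangle+D\tilde U$ with $\tilde U$ a $Y$-periodic corrector and expanding $\det(DU)$ by columns, averaging annihilates every term except $\det\langle DU\rangle$ and $\big\langle\det D\tilde U\big\rangle=0$, so $\langle\det(DU)\rangle=\det\langle DU\rangle$. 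Therefore $\det(DU)$ has the sign of $\det\langle DU\rangle$ everywhere, and $\det\big(\langle DU\rangle\,DU\big)=\det\langle DU\rangle\,\det(DU)>0$, i.e.\ \refe{detDU>0}.\ holds.

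\emph{Part $iii)$ and main obstacle.} It suffices to reuse the objects built in the proof of Theorem~\ref{thm.v1}~$iii)$: the smooth $Y_3$-periodic isotropic conductivity $\tilde\si^\ka I_3$ and the smooth $Y_3$-periodic field $D\tilde U^\ka$ coming from the chain-mail of \cite{BMN} satisfy $\Div\big(\tilde\si^\ka D\tilde U^\ka\big)=0$ and $\langle D\tilde U^\ka\rangle=I_3$, and for $\ka$ large $\det\big(D\tilde U^\ka\big)$ is negative near some point while $\big\langle\det(D\tilde U^\ka)\big\rangle=1$, hence positive somewhere too; thus $U:=\tilde U^\ka$ does the job. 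The only step that is not purely formal is the $C^1$ (rather than $C^2$) regularity in part $i)$: with $U$ only once continuously differentiable, Piola's identity has to be read distributionally and justified by approximation. Everything else is bookkeeping — around Piola's identity for $i)$, around the Alessandrini--Nesi critical-point theorem for $ii)$, and around the already-constructed chain-mail example for $iii)$.
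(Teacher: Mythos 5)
Your proposal is correct and follows the paper's proof in all essentials: in part $i)$ you use the same conductivity $\si={\rm Cof}(DU)\,DU^{-1}$ up to the positive constant factor $|\det\langle DU\rangle|$, with divergence-freeness coming from Piola's identity, and in part $iii)$ you reuse the regularized chain-mail field exactly as the paper does. The only genuine difference is in part $ii)$: the paper first invokes Theorem~1 of \cite{AlNe} to obtain $\det\big(\langle DU\rangle DU\big)>0$ almost everywhere and then excludes zeros of the determinant via Proposition~2 of \cite{AlNe}, whereas you use only Proposition~2 (absence of critical points of the scalar $\si$-harmonic function $U\cdot\xi$ with nonzero average gradient) to show that $\det(DU)$ never vanishes, and then fix its sign by connectedness of $\RR^2$ together with the null-Lagrangian identity $\langle\det DU\rangle=\det\langle DU\rangle$. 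Your route is slightly more economical, since it bypasses Theorem~1 of \cite{AlNe} entirely, and it is correct; likewise, your explicit mollification argument for the distributional form of Piola's identity when $U$ is only $C^1$ addresses a point the paper leaves implicit in its citation of \cite{Dac}.
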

\begin{Rem}\label{rem.m1lreg}
Similarly to Remark~\ref{rem.v2lreg} the assertions $i)$ and $ii)$ of Theorem~\ref{thm.m1} still hold under the less regular assumptions that
\beq\label{DUL2per}
DU\in L^2_\sharp(Y)^{d\times d}\quad\mbox{and}\quad\det\big(\langle DU\rangle DU\big)>0\;\;\mbox{a.e. in }\RR^d.
\eeq
Then, the $Y$-periodic conductivity $\si$ defined by the formula \refe{siCofDU}. below is only defined a.e. in $\RR^d$, and is not necessarily uniformly bounded from below or above in the cell period $Y$. However, $\si DU$ remains divergence free in the sense of distributions on $\RR^d$.
\end{Rem}
\noindent
{\bf Proof of Theorem~\ref{thm.m1}.}
\par\ss\noindent
$i)$ Let $U\in C^1(\RR^d)^d$ be a vector-valued function satisfying \refe{DUper}.. Then, we can define the matrix-valued function $\si$ by
\beq\label{siCofDU}
\si:=\det\big(\langle DU\rangle DU\big)\,(DU^ {-1})^TDU^ {-1}=\det\big(\langle DU\rangle\big)\,{\rm Cof}\left(DU\right)DU^ {-1},
\eeq
where ${\rm Cof}$ denotes the Cofactors matrix. It is clear that $\si$ is a $Y$-periodic continuous symmetric positive definite matrix-valued function. Moreover, Piola's identity (see, {\em e.g.}, \cite{Dac}, Theorem~3.2) implies that
\beq\label{Pio}
\Div\,\big({\rm Cof}\left(DU\right)\big)=0\quad\mbox{in }\D'(\RR^d).
\eeq
Hence, $\si DU$ is Divergence free in~$\RR^d$. Therefore, $DU$ is a realizable electric matrix field associated with the continuous conductivity $\si$.
\par\ms\noindent
$ii)$ Let $DU$ be an electric matrix field satisfying condition \refe{DUper}. and associated with a smooth conductivity in $\RR^2$. By the regularity results for second-order elliptic pde's the function $U$ is smooth in $\RR^2$. Moreover, due to \cite{AlNe} (Theorem~1) we have $\det\big(\langle DU\rangle DU\big)>0$ a.e. in~$\RR^2$. Therefore, as in the proof of Theorem~\ref{thm.v2} we conclude that \refe{detDU>0}. holds. 
\par\ms\noindent
$iii)$ This is an immediate consequence of the counter-example of \cite{BMN} combined with the regularization argument used in the proof of Theorem~\ref{thm.v1} $iii)$. \cqfd
\begin{Rem}
The conductivity $\sigma$ defined by \refe{siCofDU}. can be derived by applying the coordinate change $x'=U^{-1}(x)$ to the homogeneous conductivity $\big|\det\langle DU\rangle\big|\,I_d$.
\end{Rem}
\par
In fact there are many ways to derive a conductivity $\si$ associated with a matrix field $DU$ the determinant of which has a constant sign. Following \cite{Mil2} (Remark p. 155) such a conductivity can be expressed by $\si=JDU^{-1}$, where $J$ is a Divergence free matrix-valued function.
From this perspective we have the following extension of part $i)$ of Theorem~\ref{thm.m1}:
\begin{Pro}\label{pro.siJDU}
Let $U$ be a function in $C^1(\RR^d)^d$ satisfying \refe{DUper}. and \refe{detDU>0}..
Consider a convex function $\ph$ in $C^2(\RR^d)$ whose Hessian matrix $\nabla^2\ph$ is positive definite everywhere in $\RR^d$.
Then, $Du$ is a realizable electric matrix field with the conductivity
\beq\label{siJDU}
\si:=JDU^{-1}\quad\mbox{where}\quad J:=\det\big(\langle DU\rangle\big)\,{\rm Cof}\,\big(D(\nabla\ph\circ U)\big).
\eeq
\end{Pro}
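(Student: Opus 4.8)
The plan is to mimic the proof of part $i)$ of Theorem~\ref{thm.m1}, replacing the ``identity'' map $U$ (implicitly used there) by the composition $\nabla\ph\circ U$, and then to invoke Piola's identity for this composition together with a positivity check. Concretely, set $V:=\nabla\ph\circ U$, so that by the chain rule $DV=D(\nabla\ph\circ U)=\big(\nabla^2\ph\circ U\big)\,DU$. First I would observe that $V\in C^1(\RR^d)^d$ since $\ph\in C^2$ and $U\in C^1$, so that $\mathrm{Cof}\,(DV)$ is well-defined and continuous, and moreover $Y$-periodic: indeed $DU$ is $Y$-periodic by \refe{DUper}., hence $DV=(\nabla^2\ph\circ U)DU$ inherits $Y$-periodicity once one notes that $U$ itself need not be periodic but $\nabla^2\ph\circ U$ composed with the periodicity shift reproduces the same value --- here I would use that $U(x+Y\text{-period})-U(x)$ is a constant (since $DU$ is periodic) and that $\nabla^2\ph\circ U$ at shifted points differs from the unshifted value\ldots actually the cleaner route is: $\mathrm{Cof}\,(DV)$ is a polynomial in the entries of $DV=(\nabla^2\ph\circ U)\,DU$, and while $\nabla^2\ph\circ U$ is only periodic-up-to-translation-in-its-argument, the product with the periodic $DU$ and then the cofactor\ldots this is exactly the subtle point, see below.

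Next I would apply Piola's identity (Theorem~3.2 of \cite{Dac}) to the $C^1$ map $V=\nabla\ph\circ U$, which gives
\beq
\Div\,\big(\mathrm{Cof}\,(DV)\big)=0\quad\mbox{in }\D'(\RR^d),
\eeq
and therefore $\Div\,J=0$ in $\D'(\RR^d)$ with $J=\det\big(\langle DU\rangle\big)\,\mathrm{Cof}\,(DV)$. Then $\si DU=JDU^{-1}DU=J$ is Divergence free, which is the realizability statement. It remains to check that $\si=JDU^{-1}$ is a symmetric positive definite matrix-valued function. Using $DV=(\nabla^2\ph\circ U)DU$ and the multiplicativity of cofactors, $\mathrm{Cof}\,(DV)=\mathrm{Cof}\,(\nabla^2\ph\circ U)\,\mathrm{Cof}\,(DU)$, and $\mathrm{Cof}\,(DU)DU^{-1}=\det(DU)\,(DU^{-1})^T DU^{-1}\cdot(\text{adjust})$ --- more precisely $\mathrm{Cof}(A)=\det(A)(A^{-1})^T$, so $\mathrm{Cof}(DV)DU^{-1}=\det(\nabla^2\ph\circ U)\det(DU)\,(\nabla^2\ph\circ U)^{-T}(DU)^{-T}DU^{-1}$. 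Hence
\beq
\si=\det\big(\langle DU\rangle\big)\det\big(\nabla^2\ph\circ U\big)\det(DU)\,(DU)^{-T}\big(\nabla^2\ph\circ U\big)^{-1}(DU)^{-1}.
\eeq
The scalar prefactor $\det\big(\langle DU\rangle\big)\det(DU)=\det\big(\langle DU\rangle DU\big)$ is positive everywhere by \refe{detDU>0}., $\det(\nabla^2\ph\circ U)>0$ since $\nabla^2\ph$ is positive definite, and the matrix $(DU)^{-T}(\nabla^2\ph\circ U)^{-1}(DU)^{-1}$ is manifestly symmetric positive definite as $(\nabla^2\ph\circ U)^{-1}$ is. So $\si$ is symmetric positive definite, and continuous.

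The main obstacle I anticipate is the $Y$-periodicity of $\si$ (equivalently of $J$, equivalently of $\mathrm{Cof}\,(DV)$). Since $DU$ is $Y$-periodic, $U(x+p)=U(x)+\langle DU\rangle p$ for each period vector $p$, so $U$ is \emph{not} periodic in general, and consequently $\nabla^2\ph\circ U$ need not be periodic, which threatens periodicity of $DV$ and of $\mathrm{Cof}(DV)$. The resolution --- and the key step to get right --- is that $\si=JDU^{-1}$, and from the displayed formula above, $\si$ depends on $U$ only through $DU$ (which is periodic) and through $\nabla^2\ph\circ U$; so periodicity of $\si$ reduces to periodicity of $\det(\nabla^2\ph\circ U)\,(\nabla^2\ph\circ U)^{-1}$, which again fails pointwise. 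Thus strictly speaking $\si$ is \emph{not} $Y$-periodic unless $U$ is, and I expect the correct reading of the proposition is that \refe{siJDU}. furnishes an admissible (symmetric positive definite, continuous) conductivity realizing $DU$ on $\RR^d$ --- not necessarily periodic --- generalizing the construction, with the periodic case recovered when $\nabla\ph=\mathrm{id}$ (i.e. $\ph(y)=|y|^2/2$), for which $DV=DU$ and one recovers \refe{siCofDU}. exactly. I would therefore state and prove the realizability and the positive-definiteness carefully, and remark that periodicity of $\si$ holds precisely when $\nabla\ph\circ U$ has $Y$-periodic gradient, e.g.\ in the quadratic case.
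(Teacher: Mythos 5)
Your proof is correct and follows essentially the same route as the paper: Piola's identity applied to the $C^1$ map $\nabla\ph\circ U$ shows $J$ is Divergence free, and the multiplicativity of the cofactor matrix reduces $\si$ to $\det\big(\langle DU\rangle DU\big)\,(DU^{-1})^T\,{\rm Cof}\big(\nabla^2\ph\circ U\big)\,DU^{-1}$, which is symmetric positive definite and continuous. Your periodicity worry is resolved exactly as you guess --- Definition~\ref{def.mfield} only requires $\si\in L^\infty_{\rm loc}$ and neither the statement nor the paper's proof claims $\si$ is $Y$-periodic here --- and note that with the paper's convention $DU=\left[\partial U_j/\partial x_i\right]$ the chain rule reads $D(\nabla\ph\circ U)=DU\,\big(\nabla^2\ph\circ U\big)$, which is the factor order that makes your final (correct, symmetric) formula for $\si$ come out directly.
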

\begin{proof}
On the one hand, the matrix-valued function $J$ of \refe{siJDU}. is clearly Divergence free due to Piola's identity.
On the other hand, we have
\beq
{\rm Cof}\,\big(D(\nabla\ph\circ U)\big)={\rm Cof}\,\big(DU\,\nabla^2\ph\circ U\big)
={\rm Cof}\left(DU\right){\rm Cof}\left(\nabla^2\ph\circ U\right),
\eeq
so that the matrix-valued $\si$ defined by \refe{siJDU}. satisfies
\beq
\si=\det\big(\langle DU\rangle DU\big)\,(DU^ {-1})^T\,{\rm Cof}\left(\nabla^2\ph\circ U\right)DU^ {-1}.
\eeq
Since $\nabla^2\ph$ is symmetric positive definite, so is its Cofactors matrix. Therefore, $\si$ is an admissible continuous conductivity such that $\si DU$ is Divergence free in $\RR^d$.
\end{proof}
\subsection{The laminate case}\label{ss.lam}
\begin{figure}[!ht]
\begin{center}
\begin{tabular}{|p{2.2cm}|p{2.2cm}|p{2.2cm}|p{2.2cm}|p{2.2cm}|p{2.2cm}|}
\hline
& & \centerline{$_{\tex\si^1_{1,2}}$} & & & \centerline{$_{\tex\si^1_{1,2}}$}  \\ \cline{3-3} \cline{6-6}
& & \centerline{$_{\tex\si^2_{1,2}}$} & & & \centerline{$_{\tex\si^2_{1,2}}$}  \\ \cline{3-3} \cline{6-6}
\centerline{$_{\tex\si^1_{1,1}}$} & \centerline{$_{\tex\si^2_{1,1}}$} & \centerline{$_{\tex\si^1_{1,2}}$}
& \centerline{$_{\tex\si^1_{1,1}}$} & \centerline{$_{\tex\si^2_{1,1}}$} & \centerline{$_{\tex\si^1_{1,2}}$}  \\ \cline{3-3} \cline{6-6}
& & \centerline{$_{\tex\si^2_{1,2}}$} & & & \centerline{$_{\tex\si^2_{1,2}}$}  \\ \cline{3-3} \cline{6-6}
& & \centerline{$_{\tex\si^1_{1,2}}$} & & & \centerline{$_{\tex\si^1_{1,2}}$}  \\ \cline{3-3} \cline{6-6}
\hline
\end{tabular}
\end{center}
\caption{\it A rank-two laminate with directions $\xi_{1}=e_1$ and $\xi_{1,2}=e_2$}
\label{fig2}
\end{figure}

\begin{Def}\label{def.lam}
Let~$d,n$ be two positive integers. A rank-$n$ laminate  in~$\mathbb{R}^d$ is a multiscale microstructure defined at~$n$ ordered scales $\ep_n\ll\cdots\ll\ep_1$ depending on a small positive parameter $\ep\to 0$, and in multiple directions in~$\mathbb{R}^d\setminus\{0\}$, by the following process (see~\reff{fig2}.):
\begin{itemize}
\item At the smallest scale~$\ep_n$, there is a set of~$m_n$ rank-one laminates, the $i^{\rm th}$ one of which is composed, for $i=1,\dots,m_n$, of an $\ep_n$-periodic repetition in the direction~$\xi_{i,n}$ of homogeneous layers with constant positive definite conductivity matrices $\si^h_{i,n}$, $h\in I_{i,n}$.
\item At the scale~$\ep_k$, there is a set of~$m_k$ laminates, the $i^{\rm th}$ one of which is composed, for $i=1,\dots,m_k$, of an $\ep_k$-periodic repetition in the direction~$\xi_{i,k}\in\RR^d\setminus\{0\}$ of homogeneous layers and$/$or a selection of the~$m_{k+1}$ laminates which are obtained at stage $(k+1)$ with conductivity matrices $\si^h_{i,j}$, for $j=k+1,\dots,n$, $h\in I_{i,j}$.
\item At the scale $\ep_1$, there is a single laminate ($m_1=1$) which is composed of an $\ep_1$-periodic repetition in the direction~$\xi_{1}\in\RR^d\setminus\{0\}$ of homogeneous layers and$/$or a selection of the~$m_2$ laminates which are obtained at the scale $\ep_2$ with conductivity matrices $\si^h_{i,j}$, for $j=2,\dots,n$, $h\in I_{i,j}$.
\end{itemize}
\end{Def}
The laminate conductivity at stage $k=1,\dots,n$, is denoted by $L^\ep_k(\hat{\si})$, where $\hat{\si}$ is the whole set of the constant laminate conductivities.
\par
Due to the results of \cite{Mil1,Bri} there exists a set $\hat{P}$ of constant matrices in $\RR^{d\times d}$, such that the laminate $P_\ep:=L^\ep_n(\hat{P})$ is a corrector (or a matrix electric field) associated with the conductivity $\si_\ep:=L^\ep_n(\hat{\si})$ in the sense of Murat-Tartar \cite{MuTa}, {\em i.e.}
\beq\label{corPe}
\left\{\ba{rl}
P_\ep\rightharpoonup I_d & \mbox{weakly in }L^2_{\rm loc}(\RR^d)^{d\times d},
\\ \ecart
\Curl\left(P_\ep\right)\to 0 & \mbox{strongly in }H^{-1}_{\rm loc}(\RR^d)^{d\times d\times d},
\\ \ecart
\Div\left(\si_\ep P_\ep\right) & \mbox{is compact in }H^{-1}_{\rm loc}(\RR^d)^d.
\ea\right.
\eeq
The weak limit of $\si_\ep P_\ep$ in $L^2_{\rm loc}(\RR^d)^{d\times d}$ is then the homogenized limit of the laminate.
The three conditions of \refe{corPe}. satisfied by $P_\ep$ extend to the laminate case the three respective conditions
\beq
\left\{\ba{l}
\langle DU\rangle=I_d,
\\ \ecart
\Curl\left(DU\right)=0,
\\ \ecart
\Div\left(\si DU\right)=0,
\ea\right.
\eeq
satisfied by any electric matrix field $DU$ in the periodic case.
\par
The equivalent of Theorem~\ref{thm.m1} for a laminate is the following:
\begin{Thm}\label{thm.m2lam}
Let $n,d$ be two positive integers. Consider a rank-$n$ laminate $L^\ep_n(\hat{P})$ built from a finite set $\hat{P}$ of $\RR^{d\times d}$ (according to Definition~\ref{def.lam}) which satisfies the two first conditions of~\refe{corPe}.. Then, a necessary and sufficient condition for $L^\ep_n(\hat{P})$ to be a realizable laminate electric field, {\em i.e.} to satisfy the third condition of \refe{corPe}. for some rank-$n$ laminate conductivity $L^\ep_n(\hat{\si})$, is that $\det\big(L^\ep_n(\hat{P})\big)>0$ a.e. in $\RR^d$, or equivalently that the determinant of each matrix in $\hat{P}$ is positive.
\end{Thm}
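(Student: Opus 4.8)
The plan is to reduce the statement to the periodic case of Theorem~\ref{thm.m1}$i)$ applied \emph{at each scale of the laminate separately}, exploiting the recursive structure of Definition~\ref{def.lam}. The key structural fact I would use, taken from \cite{BMN} (Theorem~3.3), is that a rank-$n$ laminate corrector $L^\ep_n(\hat P)$ built from matrices $\hat P$ satisfying the first two conditions of \refe{corPe}. has a determinant which, up to the $\ep$-periodic microgeometry, is locally constant on each layer and equals the determinant of the corresponding matrix of $\hat P$; moreover the laminate construction is compatible with the divergence constraint precisely through a one-dimensional jump relation across each lamination interface. So ``$\det L^\ep_n(\hat P)>0$ a.e.'' is genuinely equivalent to ``$\det P>0$ for every $P\in\hat P$,'' and this equivalence I would state first and attribute to \cite{BMN}.

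\textbf{Necessity.} Suppose $L^\ep_n(\hat P)$ is a realizable laminate electric field, i.e.\ the third condition of \refe{corPe}. holds for some laminate conductivity $L^\ep_n(\hat\si)$. The idea is to pass to the homogenized limit and localize. By the theory of \cite{MuTa,Mil1,Bri}, at the finest scale $\ep_n$ each rank-one laminate is, in the limit, an ordinary periodic electric matrix field in the sense of Section~\ref{s.mat}, built on a genuine (smooth, even piecewise-constant) periodic conductivity in one direction. For a rank-one laminate in direction $\xi$ with layer matrices $P^h$, the curl-free and divergence-free conditions force $P^h=A+c^h\otimes\xi$ for a common matrix $A=\langle\,\cdot\,\rangle$ and scalars $c^h$, and the compatibility of $\si^h P^h$ across interfaces is a scalar relation; an elementary computation (as in \cite{BMN}) shows $\det P^h$ has the sign of $\det A$ independently of $h$ — in particular it cannot vanish or change sign if the conductivity is positive definite. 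Iterating this observation up the $n$ scales — at stage $k$ the selected sublaminates play the role of ``homogeneous layers'' with effective positive definite conductivities — gives $\det P>0$ for every $P\in\hat P$. (Alternatively, in the two-dimensional case one can invoke the Alessandrini--Nesi determinant theorem \cite{AlNe} directly at the homogenized level, exactly as in the proof of Theorem~\ref{thm.v2}; but the layer-by-layer argument above works in every dimension and is the one I would write out.)

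\textbf{Sufficiency.} Assume $\det P>0$ for every $P\in\hat P$ (equivalently $\det L^\ep_n(\hat P)>0$ a.e.). I would construct the conductivity scale by scale, mimicking formula \refe{siCofDU}. of Theorem~\ref{thm.m1}$i)$ within each homogeneous layer: on a layer whose corrector value is the constant matrix $P$, set $\si^P:=(\det P)\,(P^{-1})^T P^{-1}=(\det P)\,\mathrm{Cof}(P)\,P^{-1}$, which is symmetric positive definite precisely because $\det P>0$. These layer matrices assemble, via Definition~\ref{def.lam} with the \emph{same} directions $\xi_{i,k}$ and the \emph{same} selection pattern, into a rank-$n$ laminate conductivity $L^\ep_n(\hat\si)$ with $\hat\si=\{\si^P:P\in\hat P\}$. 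It then remains to check that $\si_\ep P_\ep=L^\ep_n(\hat\si)\,L^\ep_n(\hat P)$ satisfies the third line of \refe{corPe}., i.e.\ its divergence is $H^{-1}_{\rm loc}$-compact. The point is that by the algebraic identity $\si^P P = (\det P)\,\mathrm{Cof}(P) = \mathrm{Cof}(P)$ up to the scalar $\det P$, the product $\si_\ep P_\ep$ coincides scalewise with $\mathrm{Cof}(P_\ep)$ (suitably normalized), and Piola's identity \refe{Pio}. — applied at the level of the rank-one building blocks, where $P_\ep$ is literally a gradient $DU_\ep$ on the unit cell — makes each $\Div(\mathrm{Cof}(DU_\ep))$ vanish exactly, so $\Div(\si_\ep P_\ep)$ is not merely compact but identically zero on each cell, hence trivially $H^{-1}_{\rm loc}$-compact. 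Passing through the $n$ scales, the homogenization commutes with this construction by the standard div-curl / compensated-compactness machinery of \cite{MuTa,Mil1,Bri}.

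\textbf{Main obstacle.} The routine part is the algebra with cofactors and the positive-definiteness of $\si^P$. The genuinely delicate point is making precise the claim ``apply Theorem~\ref{thm.m1}$i)$ scale by scale'': one must verify that the corrector $P_\ep$ of a laminate, when restricted to a single period cell at the relevant scale, is genuinely a gradient $DU_\ep$ (so that Piola's identity \refe{Pio}. is available) and that the effective conductivities fed in from the finer scales are still symmetric positive definite — so the induction hypothesis is legitimately reinstated at each stage. This bookkeeping, together with checking that the sign condition $\det P>0$ propagates correctly through lamination (which is where \cite{BMN}, Theorem~3.3 is doing the real work), is the crux; I would organize it as a finite downward induction on the lamination rank $n$, the base case $n=1$ being the rank-one computation above and the inductive step replacing the finest-scale layers by the already-homogenized sublaminates.
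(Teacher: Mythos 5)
Your construction is the paper's: the same layer conductivities $\hat\si=\{\det(P)\,(P^{-1})^TP^{-1}:P\in\hat P\}$, the same reduction of the third condition of \refe{corPe}. to the $H^{-1}_{\rm loc}$-compactness of $\Div\big({\rm Cof}(P_\ep)\big)$, and the same disposal of necessity by citing \cite{BMN}, Theorem~3.3. The gap is in how you justify that compactness. You assert that Piola's identity makes $\Div\big({\rm Cof}(P_\ep)\big)$ vanish identically because ``$P_\ep$ is literally a gradient $DU_\ep$ on the unit cell.'' That is true only for a single rank-one laminate. For $n\geq 2$ the multiscale field $P_\ep=L^\ep_n(\hat P)$ is \emph{not} exactly curl-free --- the second line of \refe{corPe}. only gives $\Curl(P_\ep)\to 0$ strongly in $H^{-1}_{\rm loc}$ --- and the paper says explicitly that, contrary to the periodic case, ${\rm Cof}(P_\ep)$ is \emph{not} divergence free in the sense of distributions. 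So ``identically zero on each cell, hence trivially compact'' is false as stated, and the appeal to ``standard div-curl / compensated-compactness machinery'' to pass through the scales is precisely the step that needs an argument; your own ``main obstacle'' paragraph flags this but does not resolve it.

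What actually closes the argument in the paper is an interface computation that is absent from your proposal: for any two neighboring layers with values $P,Q$ and lamination direction $\xi$, rank-one connectedness $P-Q=\xi\otimes\eta$ implies the jump condition $\big({\rm Cof}(P)-{\rm Cof}(Q)\big)^T\xi=0$, obtained from the identity ${\rm Cof}(I_d+\xi\otimes\la)^T=(1+\xi\cdot\la)\,I_d-\xi\otimes\la$ with $\la=(Q^{-1})^T\eta$, since $\big((\xi\cdot\la)I_d-\xi\otimes\la\big)\xi=0$. This continuity of the normal flux of the cofactor field across every interface is what makes $\Div\big({\rm Cof}(P_\ep)\big)$ compact via the laminate corrector machinery of \cite{Bri}. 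Moreover, at a coarser scale one of the two neighbors is not a matrix of $\hat P$ but the \emph{average} of a finer sublaminate; there you must invoke the quasi-affinity (null-Lagrangian property) of the cofactors so that the average of the fine-scale cofactors equals the cofactor of the average, and the jump condition can legitimately be tested on the averages. Without these two ingredients the scale-by-scale induction you outline does not go through.
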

\noindent
{\bf Proof of Theorem~\ref{thm.m2lam}.}
The fact that the determinant positivity condition is necessary was established in \cite{BMN}, Theorem~3.3 (see also \cite{BrNe}, Theorem~2.13, for an alternative approach).
\par
Conversely, consider a rank-$n$ laminate field $P_\ep=L^\ep_n(\hat{P})$ satisfying the two first convergence of \refe{corPe}. and $\det\left(P_\ep\right)>0$ a.e. in $\RR^d$, or equivalently $\det\left(P\right)>0$ for any $P\in\hat{P}$. Similarly to \refe{siCofDU}. consider the rank-$n$ laminate conductivity defined by
\beq\label{siePe}
\si_\ep:=\det\big(P_\ep\big)\,(P_\ep^{-1})^T(P_\ep)^{-1}=L^\ep_n(\hat{\si}),\quad\mbox{where}\quad
\hat{\si}:=\big\{\det\big(P\big)\,(P^ {-1})^TP^ {-1}:P\in\hat{P}\big\}.
\eeq
Then, the third condition of \refe{corPe}. is equivalent to the condition
\beq\label{cofPe}
\Div\,\big({\rm Cof}\left(P_\ep\right)\big)\quad\mbox{is compact in }H^{-1}_{\rm loc}(\RR^d)^d.
\eeq
Contrary to the periodic case ${\rm Cof}\left(P_\ep\right)$ is not divergence free in the sense of distributions. However, following the homogenization procedure for laminates of \cite{Bri}, and using the quasi-affinity of the Cofactors for gradients (see, {\em e.g.}, \cite{Dac}), condition \refe{cofPe}. holds if any matrices $P,Q$ of two neighboring layers in a direction $\xi$ of the laminate satisfy the jump condition for the divergence
\beq\label{cofPQ}
\big({\rm Cof}\left(P\right)-{\rm Cof}\left(Q\right)\big)^T\xi=0.
\eeq
More precisely, at a given scale $\ep_k$ of the laminate the matrix $P$, or $Q$, is:
\begin{itemize}
\item either a matrix in $\hat{P}$,
\item or the average of rank-one laminates obtained at the smallest scales $\ep_{k+1},\dots,\ep_n$.
\end{itemize}
In the first case the matrix $P$ is the constant value of the field in a homogeneous layer of the rank-$n$ laminate. In the second case the average of the Cofactors of the matrices involving in these rank-one laminations is equal to the Cofactors matrix of the average, that is ${\rm Cof}\left(P\right)$, by virtue of the quasi-affinity of the Cofactors applied iteratively to the rank-one connected matrices in each rank-one laminate.
\par
Therefore, it remains to prove equality \refe{cofPQ}. for any matrices $P,Q$ with positive determinant satisfying the condition which controls the jumps in the second convergence of \refe{corPe}., namely
\beq\label{curlPQ}
P-Q=\xi\otimes\eta\quad\mbox{for some }\eta\in\RR^d.
\eeq
By \refe{curlPQ}. and by the multiplicativity of the Cofactors matrix we have
\beq
\ba{ll}
\big({\rm Cof}\left(P\right)-{\rm Cof}\left(Q\right)\big)^T
& ={\rm Cof}\left(Q\right)^T\left[{\rm Cof}\left(I_d+\left(\xi\otimes\eta\right)Q^{-1}\right)^T-I_d\right]
\\ \ecart
& ={\rm Cof}\left(Q\right)^T\left[{\rm Cof}\left(I_d+\xi\otimes\la\right)^T-I_d\right],\quad\mbox{with }\la:=\left(Q^{-1}\right)^T\eta.
\ea
\eeq
Moreover, if $\xi\cdot\la\neq -1$, a simple computation yields 
\beq
{\rm Cof}\left(I_d+\xi\otimes\la\right)^T=\det\left(I_d+\xi\otimes\la\right)\left(I_d+\xi\otimes\la\right)^{-1}
=\left(1+\xi\cdot\la\right)I_d-\xi\otimes\la,
\eeq
which extends to the case $\xi\cdot\la=-1$ by a continuity argument.
Therefore, it follows that
\beq
\big({\rm Cof}\left(P\right)-{\rm Cof}\left(Q\right)\big)^T
={\rm Cof}\left(Q\right)^T\big(\left(\xi\cdot\la\right)I_d-\xi\otimes\la\big),
\eeq
which implies the desired equality \refe{cofPQ}., since $\left(\xi\otimes\la\right)\xi=\left(\xi\cdot\la\right)\xi$. \cqfd
\par\bs\noindent
{\bf Acknowledgement.} The starting point of this work was the matrix field case. The authors wish to thank G.~Allaire who suggested we study the vector field case. GWM is grateful to the INSA de Rennes for
hosting his visits there, and to the National Science Foundation for support through grant DMS-1211359.
%
%

%
%
\end{document}